\newcommand{\keywords}[1]{\small
  \textbf{\textit{Keywords---}} #1
}
\newlist{Assumption}{enumerate}{1}
\setlist[Assumption]{label=A\arabic*}
\newtheorem{remark}{Remark}
\newtheorem{thm}{Theorem}
\DeclareMathOperator*{\minimize}{minimize}
\newcommand{\bmat}[1]{\begin{bmatrix}#1\end{bmatrix}} 
\newcommand{\pmat}[1]{\begin{pmatrix}#1\end{pmatrix}} 
\definecolor{Blue}{rgb}{0,0,1}
\definecolor{Red}{rgb}{1,0,0}
\definecolor{Green}{rgb}{0,1,0}
\newcommand{\canonicalk}[1]{\boldsymbol{e}_{#1}}
\newcommand{\NN}{\mathbb{N}}
\newcommand{\RR}[1]{\ensuremath{\mathbb{R}^{ #1 }}}
\newcommand{\HH}{H^1}
\newcommand{\mapto}{\rightarrow}
\newcommand{\Span}[1]{\mathrm{span}\{#1\}}
\newcommand{\nat}[1]{\NN({#1})}
\newcommand{\paramSymbol}{\mu}
\newcommand{\paramSymbolk}[1]{\paramSymbol_{#1}}
\newcommand{\param}{\boldsymbol{\paramSymbol}}
\newcommand{\paramSymbolLow}{\paramSymbol_{\ell}}
\newcommand{\paramk}[1]{\param_{#1}}
\newcommand{\paramDomain}{\mathcal{D}}
\newcommand{\paramDomaink}[1]{\paramDomain_{#1}}
\newcommand{\sysDomain}{\Omega}
\newcommand{\sysDomaink}[1]{\sysDomain_{#1}}
\newcommand{\sysDomainClosure}{\bar{\sysDomain}}
\newcommand{\sysDomainClosurek}[1]{\sysDomainClosure_{#1}}
\newcommand{\refDomain}{\hat{\sysDomain}}
\newcommand{\refDomaink}[1]{\refDomain_{#1}}
\newcommand{\solSymbol}{u}
\newcommand{\rsol}{\tilde{\solSymbol}_h}
\newcommand{\sol}{\solSymbol_h}
\newcommand{\solSpaceSymbol}{X}
\newcommand{\solSpace}{\solSpaceSymbol}
\newcommand{\femSpace}{\solSpaceSymbol^h}
\newcommand{\cwSol}{\mathtt{U}}
\newcommand{\reducedcwSol}{\reduced{\cwSol}}
\newcommand{\cwSolk}[1]{\cwSol_{#1}}
\newcommand{\bub}[2]{b_{#1}^{#2}}
\newcommand{\dfbubk}[1]{\boldsymbol{b}_{#1}^f}
\newcommand{\bubres}[1]{\boldsymbol{R}_{#1}}
\newcommand{\bubspace}[2]{B_{#1}^{#2}}
\newcommand{\ifunc}[1]{\Phi^h_{#1}}
\newcommand{\rifunc}[1]{\tilde{\Phi}^h_{#1}}
\newcommand{\femSpacek}[1]{\femSpace_{#1}}
\newcommand{\boundary}[1]{\partial{#1}}
\newcommand{\activeSymbol}{A}
\newcommand{\inactiveSymbol}{I}
\newcommand{\reduced}[1]{\tilde{#1}}
\newcommand{\extended}[1]{\bar{#1}}
\newcommand{\portSpace}{P^h}
\newcommand{\portSpacek}[1]{\portSpace_{#1}}
\newcommand{\spaceDimension}[2]{\mathcal{N}_{#1}^{#2}}
\newcommand{\reducedSpaceDimension}[2]{\reduced{\mathcal{N}}_{#1}^{#2}}
\newcommand{\portSubspace}{\hat{P}^h}
\newcommand{\portSubspacek}[1]{\portSubspace_{#1}}
\newcommand{\compIndexMap}{\mathcal{R}}
\newcommand{\compIndexMapk}[1]{\mathcal{R}\left(#1\right)}
\newcommand{\transMap}{\mathcal{I}}
\newcommand{\transMapk}[1]{\transMap_{#1}}
\newcommand{\globpmap}{\mathcal{G}}
\newcommand{\globpmapk}[1]{\globpmap_{#1}}
\newcommand{\bilinearSymbol}{a}
\newcommand{\sbilinearSymbol}{\bar{a}}
\newcommand{\bilinear}[1]{\bilinearSymbol\left({#1}\right)}
\newcommand{\bilineark}[2]{\bilinearSymbol_{#1}\left({#2}\right)}
\newcommand{\sbilineark}[2]{\sbilinearSymbol_{#1}\left({#2}\right)}
\newcommand{\linearSymbol}{f}
\newcommand{\linear}[1]{\linearSymbol\left({#1}\right)}
\newcommand{\lineark}[2]{\linearSymbol_{#1}\left({#2}\right)}
\newcommand{\testfun}{v}
\newcommand{\bm}[1]{\boldsymbol{#1}}
\newcommand{\femForce}{\boldsymbol{F}}
\newcommand{\femForcek}[1]{\femForce_{#1}}
\newcommand{\femStiffness}{\boldsymbol{K}}
\newcommand{\femStiffnessk}[1]{\femStiffness_{#1}}
\newcommand{\sfemStiffnessk}[1]{\bar{\femStiffness}_{#1}}
\newcommand{\instaindex}{i}
\newcommand{\refindex}{r}
\newcommand{\portindex}{j}
\newcommand{\numInsta}{n_I}
\newcommand{\numRef}{n_R}
\newcommand{\numSamples}{N_{\text{samples}}}
\newcommand{\portSymbol}{\gamma}
\newcommand{\rport}{\hat{\portSymbol}}
\newcommand{\rportk}[1]{\rport_{#1}}
\newcommand{\portbasis}[1]{\chi_{#1}}
\newcommand{\gportSymbol}{\gamma}
\newcommand{\zerovec}{\boldsymbol{0}}
\newcommand{\lagrangeMultiplier}{\boldsymbol{\lambda}}
\newcommand{\bubLagrangeMultiplier}[1]{\boldsymbol{\beta}_{#1}}
\newcommand{\reducedSolution}{\reduced{u}_h}
\newcommand{\reducedCWsol}{\reduced{\cwSol}}
\newcommand{\extendedCWsol}{\extended{\cwSol}}
\newcommand{\reducedCWsolk}[1]{\reduced{\cwSol}_{#1}}
\newcommand{\extendedCWsolk}[1]{\extended{\cwSol}_{#1}}
\newcommand{\CWsolk}[1]{\cwSol_{#1}}
\newcommand{\interfacek}[1]{\Phi^h_{#1}}
\newcommand{\portReducedSet}{\mathcal{Q}}
\newcommand{\complianceSymbol}{c}
\newcommand{\compliance}{\complianceSymbol}
\newcommand{\compliancek}[1]{\compliance_{#1}}
\newcommand{\dcomplianceSymbol}{\bar{\complianceSymbol}}
\newcommand{\dcompliance}{\dcomplianceSymbol}
\newcommand{\dcompliancek}[1]{\dcompliance_{#1}}
\newcommand{\acomplianceSymbol}{\tilde{\complianceSymbol}}
\newcommand{\acompliance}{\acomplianceSymbol}
\newcommand{\dacomplianceSymbol}{\hat{\complianceSymbol}}
\newcommand{\dacompliance}{\dacomplianceSymbol}
\newcommand{\rcompliance}{\tilde{\complianceSymbol}}
\newcommand{\legendre}[1]{L_{#1}^k}
\newcommand{\errSymbol}{e}
\newcommand{\cwerrSymbol}{\mathtt{e}}
\newcommand{\complianceErrSymbol}{\errSymbol_{\complianceSymbol}}
\newcommand{\complianceSensitivityErrSymbol}{\errSymbol_{\complianceSymbol s}}
\newcommand{\solerr}{\errSymbol^h}
\newcommand{\cwsolerr}{\cwerrSymbol^h}
\newcommand{\compErr}{{\complianceErrSymbol}^h}
\newcommand{\compSensErr}{{\complianceSensitivityErrSymbol}^h}
\newcommand{\skeletonSpace}{\mathcal{S}}
\newcommand{\elasticityTensor}{\mathbb{C}}
\newcommand{\cwStiffness}{\mathtt{K}}
\newcommand{\reducedcwStiffness}{\reduced{\cwStiffness}}
\newcommand{\scwStiffness}{\bar{\cwStiffness}}
\newcommand{\cwForce}{\mathtt{F}}
\newcommand{\cwRes}{\mathtt{R}}
\newcommand{\energyNorm}[1]{\| {#1} \|_{\param}}
\newcommand{\KinducedNorm}[1]{\| {#1} \|_{\cwStiffness}}
\newcommand{\ltwoNorm}[1]{\| {#1} \|_2}
\newcommand{\absNorm}[1]{| {#1} |}
\newcommand{\dummyVec}{\boldsymbol{v}}
\newcommand{\dummyFunc}{v}
\newcommand{\stiffnessSensitivityNorm}{\kappa}
\newcommand{\normreducedCWsol}{\nu}
\newcommand{\drawhstrutcomp}[3]{
    \begin{scope}[shift={({(#1)*(#2)}, {(#1)*(#3)})},scale=#1]
        \draw (0, 0) -- (5, 0) -- (5, 1) -- (0, 1) -- cycle;
    \end{scope}
}
\newcommand{\drawvstrutcomp}[3]{
    \begin{scope}[shift={({(#1)*(#2)}, {(#1)*(#3)})},scale=#1]
        \draw (0, 0) -- (1, 0) -- (1, 5) -- (0, 5) -- cycle;
    \end{scope}
}
\newcommand{\drawjointcomp}[3]{
    \begin{scope}[shift={({(#1)*(#2)}, {(#1)*(#3)})},scale=#1]
        \draw (0, 0) -- ++(1, 0) -- ++({1/sqrt(2)},{1/sqrt(2)}) -- ++(0, 1) -- ++({-1/sqrt(2)},{1/sqrt(2)}) -- ++(-1, 0) -- ++({-1/sqrt(2)},{-1/sqrt(2)}) -- ++(0, -1) -- cycle;
    \end{scope}
}
\newcommand{\rowshift}{(6+sqrt(2))}
\newcommand{\smallsystem}[3]{
    \drawjointcomp{#1}{#2}{#3}
    \drawhstrutcomp{#1}{#2 + 1 + 1/sqrt(2)}{#3 + 1/sqrt(2)}
    \drawjointcomp{#1}{#2 + \rowshift}{#3}
    \drawvstrutcomp{#1}{#2}{#3 + 1 + sqrt(2)}
    \drawjointcomp{#1}{#2}{#3 + \rowshift}
    \drawhstrutcomp{#1}{#2 + 1 + 1/sqrt(2)}{#3 + 1/sqrt(2) + \rowshift}
    \drawjointcomp{#1}{#2 + \rowshift}{#3 + \rowshift}
    \drawvstrutcomp{#1}{#2 + \rowshift}{#3 + 1 + sqrt(2)}
}
\pgfplotsset{compat=1.13}
\title{Component-wise reduced order model lattice--type structure design}
\author{Sean McBane\\University of Texas, Austin\footnote{Ph.D. Candidate, Oden
Institute for Computational Engineering \& Sciences}\vspace{12pt}\\
Youngsoo Choi\\Lawrence Livermore National Laboratory\footnote{Lawrence
Livermore National Laboratory is operated by Lawrence Livermore National
Security, LLC, for the U.S. Depart- ment of Energy, National Nuclear Security
Administration under Contract DE-AC52-07NA27344.}
}
\date{}
\begin{document}

\maketitle

\begin{abstract}
  Lattice--type structures can provide a combination of stiffness with light
  weight that is desirable in a variety of applications. Design optimization of
  these structures must rely on approximations of the governing physics to
  render solution of a mathematical model feasible. In this paper, we propose a
  topology optimization (TO) formulation that approximates the governing physics
  using component-wise reduced order modeling as introduced in
  \cite{huynh2013static} and \cite{eftang2013pr}, which can reduce solution time
  by multiple orders of magnitude over a full-order finite element model while
  providing a relative error in the solution of $<$1\%. In addition, the offline
  training data set from such component-wise models is reusable, allowing its
  application to many design problems for only the cost of a single offline
  training phase, and the component-wise method is nearly embarrassingly
  parallel. We also show how the parameterization chosen in our optimization
  allows a simplification of the component-wise reduced order model (CWROM) not
  noted in previous literature, for further speedup of the optimization process.
  The sensitivity of the compliance with respect to the particular
  parameterization is derived solely in the component level.  In numerical
  examples, we demonstrate a 1000x speedup over a full-order FEM model with
  relative error of $<$1\% and show minimum compliance designs for two different
  cantilever beam examples, one smaller and one larger.  Finally, error bounds
  for displacement field, compliance, and compliance sensitivity of the CWROM
  are derived.  
\end{abstract}

\keywords{Topology optimization, reduced order model, design optimization,
static condensation, domain decomposition, substructuring}

\section{Introduction}\label{sec:intro}
Many systems in nature, such as bones, shells, and honeycombs, rely on intricate
lattice--type structure designs that are strong and lightweight.  Advances in
additive manufacturing have enabled industries to fabricate microstructures with
these qualities for a range of products. Lattice structures have also been used
to design materials with unusual properties, e.g., a material with negative
Poisson ratios. The most accurate way of modeling a lattice structure is to use
a finite element method (FEM) with a conforming mesh, which provides detailed
physics information.  However, this method introduces a large number of degrees
of freedom that may make the computational solution too expensive even with
access to high-performance computing facilities. In particular, design
optimization requires many simulations as it explores a parameter space, which
is even more formidable than running a single simulation. Therefore, most 3D CAD
software companies adopt approximation schemes, such as homogenization or beam
element-based approaches, in their lattice design tools. For example, Autodesk
Within is a popular commercial software for lattice structure design that uses
the beam/shell element-based method \cite{mahdavi2003evolutionary}. Their
website can be found in \cite{within}.  A start-up company, nTopology
\cite{ntopology}, introduces implicit geometric representation for lattice
design which is based on the beam elements.  Ansys \cite{ansys} and COMSOL
\cite{comsol} use the homogenization-based method.  Both Meshify \cite{meshify}
and 3DXpert \cite{3dxpert} use both beam/shell element and homogenization-based
methods. We present a method that improves on these approximation schemes
through the use of component-wise reduced order modeling to make solution of an
ordinary FEM model of a lattice efficient enough for the many-query context of
design optimization.

In homogenization-based methods \cite{bendsoe1988generating}, the material is
taken to be periodic, composed of unit cells whose properties are determined by
a high-fidelity model. Using the effective macroscale properties computed by
this model of the microscale, continuum topology optimization algorithms are
then used to develop a macro-level structure design. Homogenization assumes
infinite periodic boundary conditions and maps the response of the high-fidelity
computational model of the unit cell to an element elasticity tensor. Many works
have explored design optimization using homogenization techniques.  For example,
Andreassen, et al., in \cite{andreassen2014design} designed a manufacturable 3D
extremal elastic microstructure, achieving a material with negative Poisson’s
ratio.  In \cite{wang2019concurrent}, the authors optimize at two scales by
using a SIMP method \cite{bendsoe1989simp} to optimize multiple unit cell
structures, coupled using homogenization to a macroscale design that optimizes
the distribution of the different microstructures by a level set method.
\cite{wang2018concurrent} similarly designs a material at two scales, but using
a simpler parameterization of the unit cell and a density-based topology
optimization at the macroscale. In \cite{white2019multiscale}, a neural network
is trained to compute a homogenized elasticity tensor as a function of selected
geometric parameters of a unit cell and the microscale parameters are
incorporated in a macroscale density-based optimization. In
\cite{zhang2019graded}, homogenization is used to design layer-wise graded
lattice materials. Some hybrid methods that combine the concepts of the
homogenization of unit cell design and the control of the cross-sectional areas
of bars are developed in \cite{chen2018finite}.There are too many other works
using homogenization to compute effective material properties to describe here;
some of the research most pertinent to design of lattice-type structures
includes: \cite{watts2019simple, wang2017iga, collet2018stress}.  Although there
are many interesting works on the homogenization-based method, it cannot provide
physics information in greater detail than the finite elements used to represent
each unit cell.  Also, the infinite periodic boundary conditions do not reflect
real boundary conditions on the structure’s external surface, limiting accuracy.
The method is also limited in that the length scale of the unit cell must be
much smaller than the system length scale; otherwise, the accuracy of the method
is very low. Moreover, it is limited to a micro-structure lattice that has a
uniform configuration; e.g., that of only an octet truss. Only its relative
volume fractions vary in space.  Several of the works referenced above use
multiple unit cell structures in adjacent regions of the design domain; such a
structure violates the periodic assumption inherent to homogenization and will
also compromise solution accuracy.  Lattice structures with a uniform
configuration are prone to dislocation slips. Thus, a more accurate method for
such structures (e.g., functionally graded lattice structures, as in
\cite{zhang2019graded}) is needed.

A beam/shell element is a reduced representation for continuum solid finite
elements of a strut/plate under the assumption that the length/area of the
strut/plate is much larger than the cross-sectional area/thickness. Therefore,
the computational cost of beam/shell elements is very cheap. Because the
beam/shell element-based lattice structure design algorithms use these
simplified model of strut/plate, it is much faster than the homogenization-based
methods.  In the beam/shell element-based lattice structure design algorithms,
loads and boundary conditions are applied to the underlying design domain, and
the optimization algorithm removes unnecessary beams/shells and thickens or
shrinks the cross-sectional areas to obtain an optimal design.  Because it
starts with the user-defined design domain that is composed of many beam/shell
elements, it can directly design for macro-level lattice structures.  This
approach also allows a flexible design domain by starting with a functionally
graded lattice structures.  The beam/shell element-based method (or the ground
structure approach) was originated from Dorn in \cite{dorn1964automatic}, where
the optimal structure was a subset of a set of bars defined prior to solving the
problem.  Since the original work, many variations have been developed. For
example, Achtziger, et al., in \cite{achtziger1992equivalent} used displacement
variables with the goal to minimize compliance. Bendsoe and Ben-Tal in
\cite{bendsoe1993truss} minimized compliance for a given volume of the material
in a truss, where the mathematical model is formulated in terms of the nodal
displacements and bar cross-sectional areas, using the steepest descent
algorithm. Recently, Choi, et al., \cite{choi2019optimal} designed an optimal
lattice structure for controllable band gaps, using beam elements. Opgenoord and
Willcox \cite{opgenoord2018aeroelastic} use a beam approximation for a low-order
model of a lattice structure, combined with a nonlinear optimization of beam
areas to design additively manufactured lattice structures with desirable
aerodynamic properties.  However, it is well known that beam/shell elements have
significant issues of dealing with stress constraints. First of all, it cannot
accurately model the stress at joints.  Additionally, Kirsch in
\cite{kirsch1993fundamental} explained that the stress constraints suddenly
disappear as the cross-sectional area approaches zero, and accordingly,
degenerated feasible regions were generated.  Furthermore, the assumption of a
high length to cross-sectional area aspect ratio is often violated. Thus, the
beam/shell element-based model is fast, but inaccurate. Therefore, a new lattice
structure design algorithm that is as fast as but more accurate than the
beam/shell element-based methods is desired. Further works on the various
beam/shell element-based lattice design can be found in
\cite{achtziger1999local, hagishita2009topology, mela2014resolving} along with
two survey papers \cite{bendsoe1994optimization, stolpe2016truss}.

A good alternative to the homogenization method and beam element method in
lattice structure design problems is to use a reduced order model (ROM). Many
ROM approaches are available and have been successfully applied to various
physical simulations, such as thermostatics and thermodynamics
\cite{hoang2020domain, choi2020sns}, computational fluid dynamics
\cite{carlberg2018conservative, dal2019algebraic, choi2019space,
grimberg2020stability, kim2020fast}, large-scale transport problem
\cite{choi2020space}, porous media flow/reservoir simulations
\cite{ghasemi2015localized, jiang2019implementation, yang2016fast}, blood flow
modeling \cite{buoso2019reduced}, computational electro-cardiology
\cite{yang2017efficient}, shallow water equations \cite{zhao2014pod,
cstefuanescu2013pod}, computing electromyography \cite{mordhorst2017pod},
spatio-temporal dynamics of a predator--prey systems
\cite{dimitriu2013application}, and acoustic wave-driven microfluidic biochips
\cite{antil2012reduced}. ROMs have been also successfully applied to design
optimization problems \cite{choi2020gradient, choi2019accelerating,
amsallem2015design}. However, there are not many references that use ROMs in a
lattice structure design problems.  A few references make use of static
condensation to reduce the dimension of the structural problem to be solved; Wu,
et al., in \cite{wu2019topology} designed a hierarchical lattice structures
using super-elements. They assumed that the substructure share one common
parameterized lattice geometry pattern as in the homogenization-based method,
but instead of homogenizing a unit cell they compute the Schur complement matrix
of the substructure using a reduced basis method.  Thus, no infinite periodic
boundary condition needs to be assumed; however, the parameterization assumed is
restrictive as it is directly tied to the volume fraction of a substructure.
Therefore, the use of this method requires the design of a substructure geometry
such that the volume fraction can easily be used to adjust geometry. 


All of the above methods share a common structure: they make the solution of the
structural equations economical by adopting a surrogate model for members of the
lattice structure. Our contribution is an improved surrogate for lattice design;
we use a component-wise ROM (CWROM) based on the static
condensation reduced basis element (SCRBE) method introduced by
\cite{huynh2013static}, which builds a reduced order model for subdomains
(components) of a structure and has built-in error estimation to provide more
accuracy than current approaches. There are several advantages of the CWROM in
lattice structure design. The CWROM greatly reduces offline training costs
relative to conventional reduced order modeling approaches that require
snapshots of the full model state, because the training is done completely at
the component level. The component library built in offline training can then be
used to model any domain that can be formed by a connected set of the trained
components, so that the same offline data set may be used to explore many
different structures; for example, different kinds of functionally graded
lattices for the same part.  The CWROM also provides a large speedup while
retaining high accuracy; the numerical results find that our algorithm achieves
1,000x speedup with a less than $1\%$ relative error vs. a conforming finite
element method, which is much more accurate than the beam-based approach.
Compared to homogenization techniques, the CWROM does not rely on any
assumptions on periodicity or length scale, and has no limitations on the
geometry of the components used to form a lattice. It can also provide much more
high resolution solution information than a homogenization model, which only
recovers a solution at the level of the finite elements that model each unit
cell.  Sub-unit cell information is lost.  This level of resolution will be
useful in the context of stress-based optimization, where it is necessary to
accurately capture stress concentrations.

An additional note is required on the improvements presented by our CWROM
approach over previous work that applies static condensation with reduced order
modeling to topology optimization \cite{wu2019topology,fu2019substructuring}.
The SCRBE method exchanges some complexity in the formulation of the static
condensation equations for greater efficiency than the simpler formulations used
in the previous papers; it restricts the degrees of freedom in the problem to be
coefficients of a basis defined over the interfaces where components connect to
one another (ports), and eliminates the rest of the degrees of freedom for each
component. The formulation allows us to express the compliance objective and its
sensitivity in terms of variables in the reduced problem component space (Section
\ref{sec:cwtopopt}), and additionally allows us to make a key
simplification when used with a SIMP parameterization of the material properties
(Section \ref{sec:linear_cwrom}) that provides an additional speedup over that
given by the reduced-order model alone. Finally, there is already a body of
literature on rigorous error bounds for the SCRBE method and its extensions
\cite{huynh2013static,eftang2013pr,smetana2016optimal,smetana2015new}, allowing
certification of the designs resulting from our component-wise procedure.

The CWROM used here is not actually the SCRBE method as described in
\cite{huynh2013static}; we use the static condensation formulation from that
work, but our own parameterization allows a simplification that makes the form
of model reduction described there obsolete (Section \ref{sec:linear_cwrom}).
Instead, we apply the port reduction of Eftang and Patera \cite{eftang2013pr} to
obtain a reduced set of interface basis functions. The SCRBE approach is
originally inspired by the component mode synthesis \cite{hurty1965dynamic,
bampton1968coupling}. Many variations have been developed; for example, it is
extended to more complex problems in \cite{huynh2013complex} and to acoustic
problems in \cite{huynh2014static}.  Recently, it has been further extended to
be applicable for solid mechanics problems with local nonlinearities
\cite{ballani2018component}.  Smetana and Patera in \cite{smetana2016optimal}
proposed an optimal port spaces of the CWROM in Kolmogorov sense
\cite{kolmogoroff1936uber}. Vallaghe et al., in \cite{vallaghe2015component}
applied the CWROM approach to the parametrized symmetric eigenproblems. The 
area to which SCRBE does not apply is problems with non-localized
nonlinearities. Furthermore, it has not previously been used in topology
optimization or for lattice structure design.

A variety of other component-wise formulations have been developed, specialized
to particular applications. Buhr, et al., in \cite{buhr2017arbilomod} introduced
an adaptive component-wise reduced order model approach for fully nonlinear
problems.  Iapachino et al. \cite{iapichino2016rbm} develop a
domain-decomposition reduced basis method for elliptic problems that share
similar advantages to the CWROM used here; they also seek a reduced set of
interface basis functions, but use a different approach than
\cite{eftang2013pr}. In \cite{kaulmann2011dg}, the authors present a reduced
basis discontinuous Galerkin approach using domain decomposition for multiscale
problems. Koh et al. \cite{koh2020mmqsrv} show a reduced order TO method for
dynamic problems based on a quasi-static Ritz vector reduced basis method
applied to substructures.

Several contributions by this paper is summarized below:
\begin{itemize}
  \item A SCRBE-kind CWROM is applied to lattice--type structure design to
    accelerate the whole design optimization process.
  \item The density parameterization is chosen for the design optimization
    process and the simplification of the CWROM formulation is shown.
  \item The sensitivity of the compliance for the CWROM is derived completely in
    the component level.
  \item Error bounds for the displacement, compliance, and its sensitivity for
    the CWROM is derived. 
  \item A speedup of 1,000x and relative error of less than 1$\%$ is
    demonstrated in compliance minimization problems. 
  \item The reusability of the trained components for the lattice structure is
    demonstrated in the design optimization problems.
\end{itemize}

\subsection{Organization of the paper}\label{sec:organization}
The subsequent paper is organized as follows. We first describe the
component-wise formulation in Section~\ref{sec:cwformulation} along with
illustrations of two simple components and an example lattice that can be
constructed using these components. Section~\ref{sec:cwrom} describes the CWROM
formulation and the port reduction procedure. Section~\ref{sec:linear_cwrom}
lays out the simplification of the CWROM possible in special cases, including
the optimization formulation here.  Section~\ref{sec:cwtopopt} details a
component-wise compliance minimization problem subject to a mass constraint, and
numerical results are shown in Section~\ref{sec:results}. The paper is concluded
in Section~\ref{sec:conclusion} with summary and discussion. 


\section{Component-wise formulation}\label{sec:cwformulation}
Our component-wise full order model (CWFOM) and reduced order model (CWROM)
follow the approach explained in \cite{huynh2013static} and \cite{eftang2013port}
where static condensation (use of the Schur complement to
eliminate interior degrees of freedom) is used to eliminate the interior degrees
of freedom of each subdomain (component) and solve for only the degrees of freedom on the
interfaces where components attach (ports). It may be viewed as
an adaptation of component mode synthesis approaches [19] to provide greater
reusability of the same offline data set, but without applicability to more
complex problems where CMS succeeds. SCRBE is predicated on a decomposition of
the solution domain into subdomains, or components; each component is defined by a
parametric mapping from a reference component in an offline library.

We first describe the component-wise full order model (CWFOM), derived using
static condensation. We then describe the component-wise reduced
order model (CWROM) used in this work; this is the port-reduced static condensation
described in \cite{eftang2013port}. Finally, we present an important simplification
to the component-wise model (either the CWFOM or the CWROM) that we later apply
in our component-wise TO formulation to accelerate model evaluations beyond what
is achieved by the unmodified CWROM. For more detailed description of the component-wise formulation, we refer to
\cite{huynh2013static}.


\subsection{Component-wise FOM}\label{sec:cwfom}
The notation to describe the component-wise model inevitably becomes complex;
therefore, we provide Figures \ref{fig:ref-components} and \ref{fig:example-system}
to assist in understanding the description.
These component domains are used in our numerical examples in
Section \ref{sec:results}. The domain of the $i$-th reference component is
written $\refDomaink{i}$, and the $j$-th port on that component is indicated by
$\rportk{i,j}$. In general, a hat superscript indicates that notation refers to
a quantity in the reference domain, while the lack of one means that a quantity in
the instantiated system is intended. A component may only connect to other
components on its ports, and for the formulation of the CWFOM here to be valid,
all of the ports on a component must be mutually disjoint.

\begin{figure}[h]
\centering
  \begin{tikzpicture}
    \drawhstrutcomp{1.5}{-5}{0}
    \drawjointcomp{1.5}{+2}{(-1/sqrt(2))}
    \node at (-6.85, 0.75) {\large{$\rportk{1, 1}$}};
    \node at (-0.58, 0.75) {\large{$\rportk{1, 2}$}};
    \node at (-3.75, 0.75) {\large{$\refDomaink{1}$}};

    \draw[blue, dashed] (0, 0.7) circle [x radius = 0.25, y radius=1];
    \draw[blue, dashed] (-7.5, 0.7) circle [x radius = 0.25, y radius=1];

    \node at (2.55, 0.75) {\large{$\rportk{2, 1}$}};
    \node at (5, 0.75) {\large{$\rportk{2, 3}$}};
    \node at (3.75, 2.1) {\large{$\rportk{2, 2}$}};
    \node at (3.75, -0.55) {\large{$\rportk{2, 4}$}};
    \node at (3.75, {(1/sqrt(2))}) {\large{$\refDomaink{2}$}};

    \draw[blue, dashed] (1.95, 0.7) circle [x radius = 0.25, y radius = 1];
    \draw[blue, dashed] (5.6, 0.7) circle [x radius = 0.25, y radius = 1];
    \draw[blue, dashed] (3.75, 2.6) circle [x radius = 1, y radius = 0.25];
    \draw[blue, dashed] (3.75, {(-1.5/sqrt(2))}) circle [x radius = 1, y radius = 0.25];
  \end{tikzpicture}
\caption{Two reference components for a 2D lattice structure. Reference domains
  are indicated by $\refDomaink{1}$ and $\refDomaink{2}$ for joint and strut
  components, respectively.  Four local ports are indicated by
  $\rportk{1,\portindex}$, $\portindex\in\nat{4}$ for the joint component, while
  two local ports are indicated by $\rportk{2,\portindex}$,
  $\portindex\in\nat{2}$ for the strut component.}
\label{fig:ref-components}
\end{figure}

\begin{figure}[h]
  \centering
  \begin{tikzpicture}
    \smallsystem{0.7}{0}{0}
    \node at (0.4, 3.5) {$\sysDomaink{1}$};
    \node at (0.4, 1.9) {$\portSymbol_1$};
    \node at (0.4, 5) {$\portSymbol_2$};
    \node at (5.55, 3.5) {$\sysDomaink{5}$};
    \node at (5.55, 5) {$\portSymbol_9$};
    \node at (5.55, 1.9) {$\portSymbol_{10}$};
    
    \node at (0.4, 6) {$\sysDomaink{2}$};
    \node at (-0.7, 6) {$\portSymbol_{3}$};
    \node at (0.4, 7.05) {$\portSymbol_{4}$};
    \node at (3, 6) {$\sysDomaink{3}$};
    \node at (1.4, 6) {$\portSymbol_{5}$};
    \node at (4.5, 6) {$\portSymbol_{6}$};
    \node at (5.55, 6) {$\sysDomaink{4}$};
    \node at (5.55, 7.05) {$\portSymbol_{7}$};
    \node at (6.6, 6) {$\portSymbol_{8}$};

    \node at (5.55, 0.8) {$\sysDomaink{6}$};
    \node at (6.65, 0.8) {$\portSymbol_{11}$};
    \node at (5.55, -0.2) {$\portSymbol_{12}$};
    \node at (3, 0.8) {$\sysDomaink{7}$};
    \node at (4.4, 0.8) {$\portSymbol_{13}$};
    \node at (1.47, 0.8) {$\portSymbol_{14}$};
    \node at (0.4, 0.8) {$\sysDomaink{8}$};
    \node at (0.4, -0.2) {$\portSymbol_{15}$};
    \node at (-0.8, 0.8) {$\portSymbol_{16}$};

    \drawvstrutcomp{1}{8}{1}
    \drawhstrutcomp{1}{10}{4}

    \node at (8.5, 3.5) {$\sysDomaink{5}$};
    \node at (8.5, 5.8) {$\portSymbol_{9}$};
    \node at (8.5, 1.2) {$\portSymbol_{10}$};
    \node at (12.5, 4.5) {$\refDomaink{1}$};
    \node at (10.5, 4.5) {$\rportk{1, 1}$};
    \node at (14.6, 4.5) {$\rportk{1, 2}$};

    \draw[->,red,thick] (12.5, 3.9) to[bend left] (9.1, 2);
    \node at (12.2, 2.5) {$\transMapk{5}(\cdot; \paramk{5})$};

    \draw[blue, thick, dashed] (5.55, 3.5) circle [x radius = 0.7, y radius = 2.3];
    \draw[blue, thick, ->] (6.3, 4) to[bend left] (7.9, 4.5);
  \end{tikzpicture}
  \caption{An example of a 2D lattice structure that assembled from several
  instances of two reference components in Fig.~\ref{fig:ref-components}. four
  struts and four joints form this particular lattice structure; thus the number
  of instantiated components, $\numInsta$, is 8. Each component is assigned a
  parameter vector, $\paramk{i}$, that defines both the geometry and other
  properties of the instantiated component. Each instantiated component is related
  to a reference component by the transformation mapping $\transMapk{i}$; this
  relationship is illustrated on the right. Note that this transformation may
  include parameters that do not affect the geometry; e.g., physical properties
  such as Young's modulus.}
  \label{fig:example-system}
\end{figure}
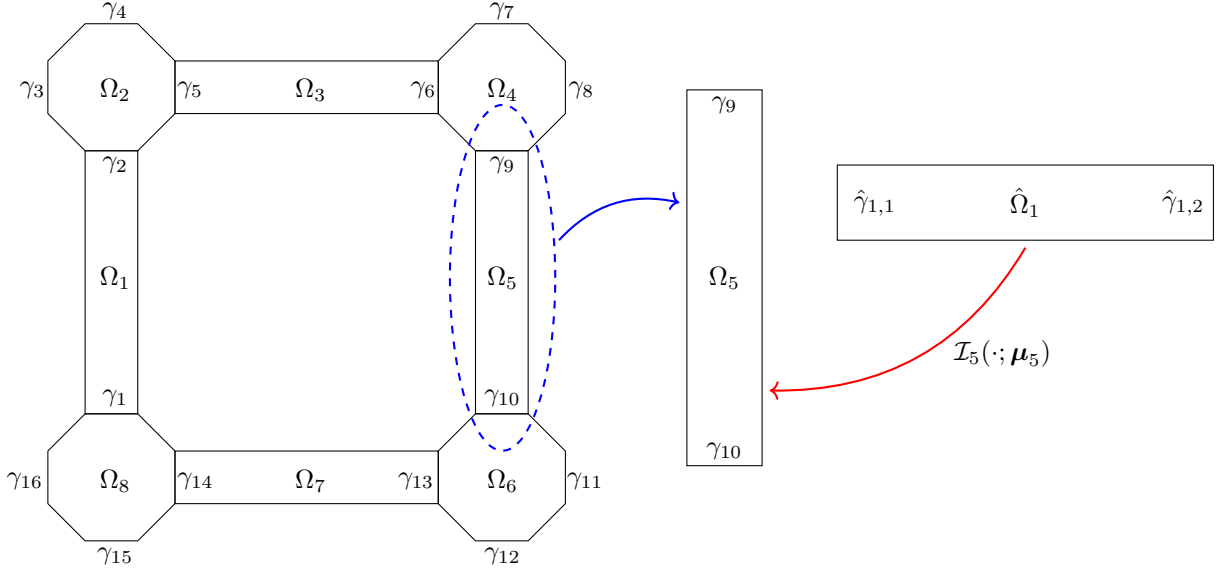

Note that there are infinitely many lattice systems that can be built using the
same two reference components in this manner; the variation is only limited by
the domain of the transformation map $\transMapk{i}$ and geometric compatibility.
This property makes the CWROM uniquely powerful because it can be used to model
many domains in the online phase while only training the reduced order models on
the reference component domains.

A discrete finite element system-level problem states that, for any parameter
$\param\in\paramDomain$ and a given system domain $\sysDomain$, the approximated
solution $\sol(\param) \in \femSpace(\sysDomain)$ satisfies
\begin{equation}\label{eq:weakform}
  \bilinear{\sol(\param),\testfun;\param} = \linear{\testfun; \param},
  \hspace{6pt} \forall \testfun\in\femSpace(\sysDomain),  
\end{equation}
where $\femSpace(\sysDomain) \subset \solSpace(\sysDomain)$ denotes
the discrete system finite element space, $\testfun \in \femSpace(\sysDomain)$
denotes a test function, $\bilinearSymbol: \HH \times \HH \times \paramDomain
\mapto \RR{}$ is a symmetric, coercive, bilinear form, and $\linearSymbol: \HH\times
\paramDomain \mapto \RR{}$ denotes a linear form.

In component-wise formulation, the physical system domain $\sysDomain$ and the
system parameter domain $\paramDomain$ are decomposed as $\sysDomainClosure =
\cup_{\instaindex=1}^{\numInsta} \sysDomainClosurek{\instaindex}$ and
$\paramDomain \subseteq \oplus_{\instaindex=1}^{\numInsta}
\paramDomaink{\instaindex}$, respectively,  
where $\numInsta$ is the number of decomposed components.
We also decompose the system parameter as an $\numInsta$-tuple: $\param =
(\paramk{1},\ldots,\paramk{\numInsta})$.  

Each decomposed component is mapped from a corresponding reference component in a
library of reference components containing $\numRef$ reference components.
In the 2D lattice system in Fig. \ref{fig:example-system}, we have two components:
a joint and a strut, so $\numRef = 2$. Each reference component has an
associated reference domain $\refDomaink{\refindex}$, $\refindex \in
\nat{\numRef}$, $\nat{\numRef} \equiv \{1,\ldots,\numRef\}$.  Each reference
component domain boundary is denoted as $\boundary{\refDomaink{\refindex}}$ and
it has a set of $n_\refindex^\gamma$ disjoint reference ports,
denoted as $\rportk{r,j},\ j \in \nat{n_\refindex^\gamma}$. For example, the
joint component has $n_2^\gamma = 4$, while the strut component has $n_1^\gamma
= 2$ as shown in Figure \ref{fig:ref-components}. A system is formed by
$\numInsta$ instantiated reference components from the library.
For example, Fig. \ref{fig:example-system} shows eight
instantiated component domains connected through ports to form a
two-dimensional lattice system where each component is mapped from one of the
reference components in Fig. \ref{fig:ref-components}. Note that a port can be
used either to connect two instantiated components or to serve as a boundary.
Each reference component is associated with a discrete finite element space
$\femSpacek{\refindex},\refindex \in \nat{\numRef}$; we also denote the dimension
of this space as $\spaceDimension{r}{h}$, $r \in \nat{\numRef}$. The port space
$\portSpacek{r,j}$ of dimension $\spaceDimension{r,j}{\portSymbol}$ is defined
as the restriction of $\femSpacek{\refindex}$ to $\rportk{r,j},j\in
\nat{n_r^\gamma}$. The formulation of the problem in reference domain finite
element spaces is important to the performance of the component-wise method;
the computations in the instantiated system are performed in the reference
domain through the transformation mapping, defined below.

We require several mappings for bookkeeping between a reference component of the
library and each instance in the system. First, the indices for the instance-reference
pair can be tracked by a mapping $\compIndexMap:\nat{\numInsta}\to\nat{\numRef}$
that maps each of the $\numInsta$ component instantiations to one of the
$\numRef$ reference components in the library (e.g., in
Fig.~\ref{fig:example-system}, $\compIndexMapk{4}=2$ and $\compIndexMapk{7}=1$).
Note that the $i$-th instantiated component may connect
to at most $n_{\compIndexMapk{i}}^\gamma$ other instantiated components in the
system through its local ports. The discrete finite element space,
$\femSpace(\sysDomain)$, can now be also decomposed and defined as a direct sum
of component finite element spaces: $\femSpace(\sysDomain) =
\left(\oplus_{i=1}^{\numInsta} \femSpacek{\compIndexMapk{i}}\right) \cap
X(\sysDomain)$; hence $\femSpace(\sysDomain)$ inherits the boundary conditions
and global continuity enforced by the continuous finite element space,
$X(\sysDomain)$. Now, we define the transformation map, $\transMapk{i} :
\refDomaink{\compIndexMapk{i}} \times \paramDomaink{i} \to \sysDomain_i$ that connects the instantiated
component domain with the corresponding reference domain as $\sysDomain_i \equiv
\transMapk{i}\left(\refDomaink{\compIndexMapk{i}}; \paramk{i}\right)$.
It follows naturally that the transformation map defines the connection between
the instantiated component local port, $\portSymbol_{i,j}$ (the $j$-th port
on the $i$-th instantiated component) with a reference component. For example,
Fig. \ref{fig:example-system} illustrates the transformation map
$\transMapk{i}: \refDomaink{\compIndexMapk{i}} \to \sysDomain_i$ for $i = 5$,
where the transformation maps the reference port $\rportk{1,1}$ to the
instantiated port $\gamma_{9}$ and $\rportk{1, 2}$ to $\gamma_{10}$.

A local-to-global port mapping $\globpmapk{i}: \nat{n_{\compIndexMapk{i}}^\gamma} \to \nat{n_0^\gportSymbol}$
maps a local port index to a global port index. Here, $n_0^\gportSymbol$ denotes
the number of global ports in the system. We also denote the number of global
ports excluding ports on which Dirichlet boundary conditions are applied
by $n^\gportSymbol$. The global port index $p \in
\nat{n_0^\gportSymbol}$ is obtained from
a local port $j$ on component $i$ in the system by $p = \globpmapk{i}(j)$;
that is, the global port $\gamma_p$ is the same port as the local port $\gamma_{i,j}$.
For example, in Fig. \ref{fig:example-system}, we see that $\gamma_4
= \gamma_{2,2}$ and $\gamma_1 = \gamma_{8,2} = \gamma_{1, 2}$.
The connectivity of the system is defined through index sets
$\pi_p, p \in \nat{n_0^\gportSymbol}$. In the case of
an interior global port (coincidence of two local ports $\gamma_{i,j}$ and
$\gamma_{i',j'}$), we set $\pi_p = \left\{(i,j),(i',j')\right\}$; and in the
case of a boundary global port (a single local port $\portSymbol_{i,j}$), we set
$\pi_p = \left\{(i,j)\right\}$. For example, in Fig.
\ref{fig:example-system}, we see that $\pi_2 =
\left\{(1,1),(2,4)\right\}$ and $\pi_3 = \left\{(2, 1)\right\}$.

Using the maps defined above, the bilinear and linear forms can also be
decomposed; for any $w, v \in \HH(\sysDomain)$,
\begin{equation}
  \label{eq:decomposed_bilinear}
  \bilinear{w, v; \param} = \sum_{i=1}^{\numInsta} a_{\compIndexMapk{i}}
  \left(w|_{\sysDomain_i},v|_{\sysDomain_i};\paramk{i}\right)
\end{equation}
and
\begin{equation}
  \label{eq:decomposed_linear}
  f(v;\param) = \sum_{i=1}^{\numInsta}f_{\compIndexMapk{i}}
  \left(v|_{\sysDomain_i}\right)
\end{equation}
The port space for each reference port, $\portSpacek{r,j}$, is defined by its basis:
\begin{equation}\label{eq:port_space}
  \portSpacek{r,j} \equiv \Span{\portbasis{r,j,k},
  r\in\nat{\numRef},j\in\nat{n_r^\gamma},k\in\nat{\mathcal{N}_{r,j}^\gamma}},
\end{equation}
where $\portbasis{r, j, k}$ are linearly independent; for the model to be full order,
$\portSpacek{r,j}$ must equal the restriction of $\femSpacek{r}$ to $\rportk{r,j}$.
As a compatibility condition to enforce continuity, we require that for any global port index
$\pi_p = \left\{(i,j),(i',j')\right\}$,
\begin{equation}
  \portbasis{\compIndexMapk{i},j,k} =
  \portbasis{\compIndexMapk{i'},j',k},
  k \in \nat{\mathcal{N}_p^\gamma}
\end{equation}
Here, the full dimension of the port is either
$\mathcal{N}_p^\gamma$ or $\mathcal{N}_{r,j}^\gamma, r = \compIndexMapk{i}$,
depending on whether global or local ports are used; here, $(\compIndexMapk{i}, j) \in \pi_p$.
Given the reference port bases $\chi_{r, j, k}$, we additionally define an extension
to the interior of a reference component, $\psi_{r, j, k}$, which is equal to $\chi_{r, j, k}$
on $\rportk{r, j}$, zero on the other ports, and varies smoothly in the interior.
For example, in \cite{huynh2013static}, the Laplacian lifted $\psi_{r, j, k}$ are defined
by
\begin{equation}\label{eq:laplace_lifting}
  \begin{split}
    \Delta \psi_{r, j, k} = 0 \ \text{ in } \refDomaink{r} \\
    \psi_{r, j, k} = \chi_{r, j, k} \ \text{ on } \rportk{r, j} \\
    \psi_{r, j, k} = 0 \ \text{ on } \rportk{i, j}, \ i \neq r,
  \end{split}
\end{equation}
however, other choices of lifting are possible, and even desirable; see Section
\ref{sec:linear_cwrom}.

We may now express the global solution as
\begin{equation}
  \label{eq:scrbe_sol}
  \sol(\param) = \sum_{i=1}^{\numInsta} \bub{i}{f;h}(\paramk{i})
  + \sum_{p=1}^{n^\gamma} \sum_{k=1}^{\mathcal{N}_p^\gamma}
  \cwSolk{p,k}(\param)\Phi_{p,k}^h(\param)
\end{equation}
where all the terms except $\cwSolk{p,k}(\param)$ can be obtained through
component-wise computations. For example, $\bub{i}{f;h}(\paramk{i}) \in
\bubspace{\compIndexMapk{i};0}{h}$ is a bubble function associated with the
component right-hand side, which satisfies
\begin{equation}
  \label{eq:forcing_bubble}
  a_{\compIndexMapk{i}}\left(\bub{i}{f;h}(\paramk{i}), v; \paramk{i}\right)
  = f_{\compIndexMapk{i}}\left(v; \paramk{i}\right), \ \forall v
  \in \bubspace{\compIndexMapk{i};0}{h}
\end{equation}
where $\bubspace{r;0}{h}, r \in \nat{\numRef}$ are the
\textit{bubble} spaces associated with each reference component domain by
\begin{equation}
  \bubspace{r;0}{h} \equiv
  \left\{
    w \in \femSpacek{r}: w|_{\rportk{r,j}} = 0, j \in \nat{n_r^\gamma}
  \right\}, \ r \in \nat{\numRef}
\end{equation}

In order to define the patched interface basis functions $\Phi_{p,k}^h(\param)$,
we first need to define the interface function $\phi_{i,j,k}^h(\paramk{i})$ as
\begin{equation}
  \label{eq:ifunc}
  \phi_{i,j,k}^h(\paramk{i}) \equiv \bub{i,j,k}{h}(\paramk{i}) +
  \psi_{\compIndexMapk{i},j,k},
\end{equation}
where the interface bubble functions $\bub{i,j,k}{h}(\paramk{i}) \in
\bubspace{\compIndexMapk{i};0}{h}, k \in
\nat{\spaceDimension{\compIndexMapk{i},j}{\gamma}}, j \in
\nat{n_{\compIndexMapk{i}}^\gamma}$, satisfy
\begin{equation}
  \label{eq:ifunc_bubble}
  a_{\compIndexMapk{i}}\left(\bub{i,j,k}{h}(\paramk{i}), v; \paramk{i}\right) =
  -a_{\compIndexMapk{i}}\left(\psi_{\compIndexMapk{i},j,k}, v; \paramk{i}\right),
  \ \forall v \in \bubspace{\compIndexMapk{i};0}{h}
\end{equation}

The patched interface basis function $\ifunc{p,k}(\param)$ for $\pi_p =
\left\{(i,j),(i',j')\right\}$ is defined as $\ifunc{p,k}\equiv \phi_{i,j,k}^h +
\phi_{i',j',k'}^h$, while we define $\ifunc{p,k} \equiv \phi_{i,j,k}^h$ for a
boundary global port $\pi_p = \left\{(i,j)\right\}$. All the patched interface
functions and the bubble functions are extended by zero outside of their
associated component so that the global solution representation (\ref{eq:scrbe_sol})
makes sense.

We now plug Eq. (\ref{eq:scrbe_sol}) into (\ref{eq:weakform}) and note that the
only unknowns are $\cwSolk{p,k}(\param)$, i.e., the coefficients for the patched
interface basis functions, after the component-wise computations for the bubble
functions and the patched interface basis functions. Therefore, we only need to
set the test functions to be active on the \textit{skeleton}, whose space
$\skeletonSpace$ is defined as
\begin{equation}\label{eq:skeleton}
  \skeletonSpace \equiv \Span{\ifunc{p,k}(\param),
  p\in \nat{n^\gamma}, k \in \nat{\spaceDimension{p}{\gamma}}}
  \subset X^h
\end{equation}
We denote the number of unknowns as $n_{SC} = \sum_{p=1}^\gamma
\spaceDimension{p}{\gamma}$. The weak form to solve for the unknowns,
$\cwSolk{p,k}(\param)$, can be equivalently written as the following linear
algebraic system of equations, i.e., for any $\param \in \paramDomain$, find
$\cwSol(\param) \in \RR{n_{SC}}$ such that
\begin{equation}
  \label{eq:cw_linear_system}
  \cwStiffness(\param)\cwSol(\param) = \cwForce(\param)
\end{equation}
where
\begin{equation}\label{eq:cwStiffness}
  \cwStiffness_{(p,k),(p',k')}(\param) =
  \bilinear{\ifunc{p,k},\ifunc{p',k'};\param}
\end{equation}
\begin{equation}\label{eq:cwForce}
  \cwForce_{(p,k)}(\param) = \linear{\ifunc{p,k}(\param); \param} - 
  \sum_{i=1}^{\numInsta} \bilinear{\bub{i}{f;h}(\paramk{i}), \ifunc{p,k}(\param);
  \param}
\end{equation}
for $p,p' \in \nat{n^\gamma}, k \in \nat{\spaceDimension{p}{\gamma}},$ and
$k' \in \nat{\spaceDimension{p'}{\gamma}}$. Note that $(p,k)$ is a double-index
notation for a single degree of freedom. The assembly of $\cwStiffness(\param)$
and $\cwForce(\param)$ can be done by looping over the local Schur complement
matrices and load vectors (Eqs. \ref{eq:local_schur} and \ref{eq:local_load})
according to Algorithm \ref{alg:sc_assemble}.
\begin{equation}\label{eq:local_schur}
  \cwStiffness_{(j,k),(j',k')}^{i}(\paramk{i}) =
  a_{\compIndexMapk{i}}\left(\phi_{i,j,k}^h(\paramk{i}),
  \phi_{i,j',k'}^h(\paramk{i});\paramk{i}\right)
\end{equation}
\begin{equation}\label{eq:local_load}
  \cwForce_{(j,k)}^i(\paramk{i}) =
  f_{\compIndexMapk{i}}\left(\phi_{i,j,k}^h(\paramk{i});\paramk{i}\right) -
  a_{\compIndexMapk{i}}\left(\bub{i}{f;h}(\paramk{i}), \phi_{i,j,k}^h(\paramk{i});
  \paramk{i}\right)
\end{equation}
\begin{algorithm}
  \begin{algorithmic}
    \STATE $\cwForce_0(\param) = \bm{0}, \cwStiffness_0(\param) = \bm{0}$
    \FOR {$i = 1,\ldots,\numInsta$}
      \FOR {$j=1,\ldots,n_{\compIndexMapk{i}}^\gamma$}
        \FOR {$k = 1,\ldots,\spaceDimension{\compIndexMapk{i},j}{\gamma}$}
          \STATE $\cwForce_{0;\globpmapk{i}(j),k}(\param) \gets
            \cwForce_{0;\globpmapk{i}(j),k}(\param) +
            \cwForce_{(j,k),(j',k')}^i(\paramk{i})$
          \FOR {$j' = 1,\ldots,n_{\compIndexMapk{i}}^\gamma$}
            \FOR {$k' = 1,\ldots,\spaceDimension{\compIndexMapk{i},j'}{\gamma}$}
              \STATE $\cwStiffness_{0;\left(\globpmapk{i}(j),k\right),
                \left(\globpmapk{i}(j'),k'\right)}(\param) \gets
                \cwStiffness_{0;\left(\globpmapk{i}(j),k\right),
                \left(\globpmapk{i}(j'),k'\right)}(\param) +
                \cwStiffness_{(j,k),(j',k')}^i(\paramk{i}) $
            \ENDFOR
          \ENDFOR
        \ENDFOR
      \ENDFOR
    \ENDFOR
    \STATE {Eliminate port Dirichlet degrees of freedom:
    $\cwForce_0(\param) \to \cwForce(\param)$ and
    $\cwStiffness_0(\param) \to \cwStiffness(\param)$}
  \end{algorithmic}
  \caption{Component-based static condensation assembly loop}
  \label{alg:sc_assemble}
\end{algorithm}
\begin{remark}
  The Schur complement matrix $\cwStiffness(\param)$ is symmetric and positive-definite
  (SPD), thanks to symmetry and coercivity of $\bilinear{\cdot,\cdot; \param}$, the
  definition of $\cwStiffness(\param)$ in (\ref{eq:cwStiffness}) and linear
  independence of the $\ifunc{p,k}(\param)$, $k \in \nat{\spaceDimension{p}{\gamma}}$,
  $p \in \nat{n^\gamma}$.
\end{remark}


\subsection{Component-wise reduced order model}\label{sec:cwrom}
The CWFOM presented in Section \ref{sec:cwfom} reduces the number of degrees of
freedom in the original problem, but in the general case requires more work to
construct the linear system than would be needed to solve a finite element model.
Its computational cost may be reduced by introducing reduced port bases
\cite{eftang2013port}. We denote
the component-wise method with reduced port bases the component-wise ROM
(CWROM). The port reduction is effected by introducing a subspace
$\portSubspacek{i,j}$ of dimension $\reducedSpaceDimension{i,j}{\gamma} <
\spaceDimension{\compIndexMapk{i},j}{\gamma}$ of the port space
$\portSpacek{\compIndexMapk{i},j}$ defined in (\ref{eq:port_space}).  For
example, we define the reduced port space $\portSubspacek{\compIndexMapk{i},j}$ for instantiated
component $i$ and port index $j$, as
\begin{equation}\label{eq:reduced_port_space}
  \portSubspacek{\compIndexMapk{i},j} \equiv
  \Span{\portbasis{\compIndexMapk{i},j,k}, i \in \nat{\numInsta}, j \in
  \nat{n_{\compIndexMapk{i}}^\gamma}, k \in
  \nat{\reducedSpaceDimension{\compIndexMapk{i},j}{\gamma}}
  }.
\end{equation}

As in the CWFOM, we impose $\reducedSpaceDimension{\compIndexMapk{i},j}{\gamma} =
\reducedSpaceDimension{\compIndexMapk{i'},j'}{\gamma}$ and
$\portbasis{\compIndexMapk{i},j,k} = \portbasis{\compIndexMapk{i'},j',k}$
on global port $\pi_p =
\left\{(i,j),(i',j')\right\}$ for solution continuity, which will be
satisfied naturally by the pair-wise training approach. We denote the number of
reduced global port degrees of freedom as $\reducedSpaceDimension{p}{\gamma}$
where we must have $\reducedSpaceDimension{p}{\gamma} =
\reducedSpaceDimension{\compIndexMapk{i},j}{\gamma}= \reducedSpaceDimension{\compIndexMapk{i'},j'}{\gamma}$ for
$\pi_p = \left\{(i,j),(i',j')\right\}$.  Then, the global solution to the CWROM
can be expressed as
\begin{equation}\label{eq:reduced_sol}
  \reducedSolution(\param) = \sum_{i=1}^{\numInsta} \bub{i}{f;h}(\paramk{i}) +
  \sum_{p=1}^{n^\gamma} \sum_{k=1}^{\reducedSpaceDimension{p}{\gamma}}
  \reducedCWsolk{p,k}(\param)\rifunc{p,k}(\param)
\end{equation}
where $\bub{i}{f;h}(\paramk{i})$ is obtained and defined as in
(\ref{eq:forcing_bubble}). The reduced patched interface functions
$\rifunc{p,k}(\param)$ can be obtained by following the same procedure introduced
in Section \ref{sec:cwfom}; first, the reduced lifted port basis $\tilde{\psi}_{r,j,k}$
is obtained by lifting the members of $\portSubspacek{r,j}$, e.g. using the lifting
in Eq. \ref{eq:laplace_lifting}. Then the reduced interface functions $\tilde{\phi}^h_{r,j,k}(\param)$
are defined in terms of $\tilde{\psi}_{r,j,k}$ just as shown in Eqs. \ref{eq:ifunc} and
\ref{eq:ifunc_bubble}, and finally, the reduced patched interface functions are defined by
$\rifunc{p,k}(\param) = \tilde{\phi}^h_{i,j,k}(\param) + \tilde{\phi}^h_{i',j',k}(\param)$
where $\pi_p = \left\{(i,j),(i',j')\right\}$, or $\rifunc{p,k}(\param) = \tilde{\phi}^h_{i,j}(\param)$
for $\pi_p = \left\{(i,j)\right\}$. We introduce the reduced coordinate,
$\reducedCWsolk{p,k}$ to distinguish it from the coordinate $\CWsolk{p,k}$ in
\eqref{eq:scrbe_sol}.  Therefore, it is key to build a good reduced port space
$\portSubspacek{i,j}$.

This model reduction reduces both the eventual size of the Schur complement
system to be solved and the cost of its construction; the latter is because the
number of interface bubble functions to be solved for from Eq. \ref{eq:ifunc_bubble}
on reference port $\rportk{i,j}$ is reduced to the number of elements of the reduced
port basis $\portSubspacek{i,j}$. It is key to build a reduced port space that
captures the full range of behavior of solutions for all systems in which a component
will be instantiated. There are many ways to construct $\portSubspacek{i,j}$; for
example, any orthogonal polynomials can serve as a basis of the reduced port
space if the solution is assumed to be smooth on the port, such as Legendre or
Chebyshev polynomials for 1D ports and Zernike polynomials for 2D unit disc
ports.  These are special cases of Gegenbauer
polynomials, thus a special type of Jacobi polynomials.  Other types, such as
Wilson or Askey-Wilson polynomial types, may also serve. In order to achieve a
port basis that captures the behavior of the solution for all instantiated
systems, however, Eftang and Patera introduce a pairwise training procedure in
\cite{eftang2013port} in which each port space is constructed empirically by
considering all possible connections between two components. The procedure is
described in Algorithm \ref{alg:pairwise_training} and illustrated in Figure
\ref{fig:pairwise}.

\begin{algorithm}
  \hspace*{\algorithmicindent}\textbf{Input:} Two component domains 
  $\Omega_{1}$ and $\Omega_{2}$ connected at a common port
  $\gamma_{p^*} = \rportk{\compIndexMapk{1},j} = \rportk{\compIndexMapk{2}, j'}$.

  \hspace*{\algorithmicindent}\textbf{Output:} $S_{pair} \neq \emptyset$
  \begin{algorithmic}
    \FOR {$i = 1, \ldots,N_{samples}$}
      \STATE {Assign random parameters $\paramk{1} \in \paramDomaink{1}$ and
      $\paramk{2} \in \paramDomaink{2}$ to the two components}
      \STATE {On all non-shared ports $\gamma_p, p \neq p^*$, assign random
      boundary conditions:
        \begin{equation}
          u|_{\gamma_p} = \sum_{k=1}^{n_{i,j}^\gamma} \frac{q}{k^\eta}
          \legendre{i,j}
        \end{equation}
      where $\pi_p = \left\{(i,j)\right\}$
      }
      \STATE {Solve the governing equation (\ref{eq:weakform}) on the
      two-component system}
      \STATE {Extract solution $u_{\gamma_{p^*}}$ on the shared port}
      \STATE {Add mean-corrected port solution to snapshot set:
        \begin{equation}
          S_{pair} \gets S_{pair} \cup \left(u_{\gamma_{p^*}} -
          \frac{1}{\left|\gamma_{p^*}\right|}
          \int_{\gamma_{p^*}}u|_{\gamma_{p^*}}\right)
        \end{equation}
      }
    \ENDFOR
  \end{algorithmic}
  \caption{Pairwise training for reduced port spaces}
  \label{alg:pairwise_training}
\end{algorithm}

\begin{figure}[h]
  \centering
  \begin{tikzpicture}
    \drawhstrutcomp{1.4}{0}{0}
    \drawjointcomp{1.4}{(5+1/sqrt(2))}{(-1/sqrt(2))}

    \node at ({(2.5*1.4)}, 0.7) {\large{$\refDomaink{1}$}};
    \node at ({(1.4*(5.5+1/sqrt(2)))}, 0.7) {\large{$\refDomaink{2}$}};

    \node at (0.28, 0.7) {$\gamma_2$};
    \node at ({(4.8*1.4)}, 0.7) {$\gamma_1$};
    \node at ({1.4*(6+sqrt(2)-0.2)}, 0.7) {$\gamma_4$};
    \node at ({1.4*(5.5+1/sqrt(2))}, {1.4*(1 + 1/sqrt(2)-0.2)}) {$\gamma_3$};
    \node at ({1.4*(5.5+1/sqrt(2))}, {1.4*(-1/sqrt(2)+0.2)}) {$\gamma_5$};
  \end{tikzpicture}
  \caption{Illustration of the pairwise training for the shared port $\gamma_1$ on 
    reference component domains
    $\refDomaink{1}$ and $\refDomaink{2}$. First of all, randomly assign
    $\paramk{1}\in\paramDomaink{1}$ and $\paramk{2}\in\paramDomaink{2}$.
    Second, assign random boundary conditions on all non-shared ports, i.e.,
    $\gamma_{2}$, $\gamma_3$, $\gamma_4$, and $\gamma_5$ as shown
    in Step 3 of Algorithm \ref{alg:pairwise_training}.  Third, extract the solution of the governing
    equation \eqref{eq:weakform} on the shared port, i.e., $\sol|_{\gamma_1}$
    and add it to the snapshot sets after subtracting the average as shown in Step
    6 of Algorithm \ref{alg:pairwise_training}. Repeat this process
    $\numSamples$ times.}
    \label{fig:pairwise}
\end{figure}

To describe the procedure, we first define the discrete generalized Legendre
polynomials, $\legendre{r,j}$, for port $j$ of the reference component $r$,
which satisfy the singular Sturm-Liouville eigenproblem:
\begin{equation}
  \label{eq:sturm-liouville}
  \int_{\rportk{r,j}} s_{r,j} \nabla \legendre{r,j} \cdot \nabla \testfun =
  \Lambda_{r,j}^k \int_{\rportk{r,j}} \legendre{r,j}v, \ \forall v \in
  P_{r,j}^h, \ k \in \nat{\spaceDimension{r,j}{\gamma}}
\end{equation}
Here, the port boundary vanishing diffusion mode, $s_{r,j} \in P_{r,j;0}^h$, can
be obtained by solving
\begin{equation}
  \label{eq:vanishing_mode}
  \int_{\rportk{r,j}} \nabla s_{r,j} \cdot \nabla v = \int_{\rportk{r,j}} v,
  \ \forall v \in P_{r,j;0}^h
\end{equation}
where the port space with homogeneous boundary $P_{r,j;0}^h$ is defined as
$P_{r,j;0}^h \equiv \left\{v \in P_{r,j}^h : v_{\partial\rportk{r,j}} =
0\right\}$.  Note that $\partial\rportk{r,j}$ describes two end points in 1D
ports and boundaries (curves) in 2D ports. The Legendre polynomials,
$\legendre{r,j}$, are used to specify random boundary conditions $u_{\gamma_p}$
in the pairwise training procedure (Step 3 of Algorithm
\ref{alg:pairwise_training}). There, the random variable, $q \in \RR{}$, is
drawn from a univariate uniform or log uniform distribution over $(-1, 1)$ and
the tuning parameter, $\eta \geq 0$, acts as a control for the expected regularity of
solutions. Then the governing
equation (\ref{eq:weakform}) on the pair of two components is solved and the
solution on the shared port, $u_{\gamma_{p^*}}$, is extracted. In Step 6, we
subtract the extracted solution's average and add to the set of snapshots
$S_{pair}$. This ensures that the resultant port basis is orthogonal to the
constant function. This procedure should be repeated for a reference port
$\gamma_{i,j}$ for each configuration of two instantiated components in which
the port will be used in the online phase, to ensure that the reduced space
constructed captures the solution well in all configurations.

Once the snapshot set $S_{pair}$ is constructed, the
$\reducedSpaceDimension{i,j}{\gamma}$ port basis vectors can be found by the
proper orthogonal decomposition (POD).  The basis from POD is an optimally
compressed representation of $\Span{\bm{S}_{pair}}$ in the sense that it
minimizes the difference between the original snapshot matrix and the projected
one onto the port subspace $\hat{P}_{i,j}^{h}$:
\begin{equation}
  \label{eq:pod}
  \minimize_{\bm{\chi}\in\RR{\spaceDimension{i,j}{\gamma} \times
  \reducedSpaceDimension{i,j}{\gamma}}, \bm{\chi}^T\bm{\chi} = \bm{I}_{
    \reducedSpaceDimension{i,j}{\gamma} \times \reducedSpaceDimension{i,j}{\gamma}
  }}
  \left\|\bm{S}_{pair} - \bm{\chi}\bm{\chi}^T\bm{S}_{pair}\right\|_F^2
\end{equation}
where $\left\|\cdot\right\|_F$ denotes the Frobenius norm and $\bm{S}_{pair} \in
\RR{\spaceDimension{i,j}{\gamma} \times l}$ denotes a matrix whose columns
consist of the mean-corrected port solutions in Step 6 of Algorithm
\ref{alg:pairwise_training}, $l$ denotes the number of snapshots in $S_{pair}$
and $\bm{\chi} \in \RR{\spaceDimension{i,j}{\gamma}
\times\reducedSpaceDimension{i,j}{\gamma}}$ is the port basis matrix, i.e.,
$\hat{P}_{i,j}^h = \Span{\bm{\chi}}$, which play the role of unknowns in the
minimization problem (\ref{eq:pod}). The solution of this minimization can be
obtained by setting $\bm{\chi}$ as the first
$\reducedSpaceDimension{i,j}{\gamma}$ columns of $\bm{U}$, where $\bm{U}$ is the
left singular matrix of the following thin singular value decomposition (SVD):
\begin{equation}
  \label{eq:svd}
  \bm{S}_{pair} = \bm{U\Sigma V}^T
\end{equation}
where $\bm{U} \in \RR{\spaceDimension{i,j}{\gamma} \times l}$ and $\bm{V} \in
\RR{l\times l}$ are orthogonal matrices and $\bm{\Sigma} \in \RR{l \times l}$ is
a diagonal matrix with singular values on its diagonal. The ordering of these
singular values is defined to decrease along the diagonal so that the first SVD
basis vector is more important than subsequent basis vectors, making it easy to
truncate and only use dominant modes in the reduced basis. POD is closely
related to principal component analysis in statistics
\cite{hotelling1933analysis} and Karhunen-Lo\`eve expansion \cite{loeve1955} in
stochastic analysis. Since the objective function in (\ref{eq:pod}) does not
change even though $\bm{\chi}$ is post-multiplied by an arbitrary
$\hat{P}_{i,j}^h \times \hat{P}_{i,j}^h$ orthogonal matrix, the POD procedure
seeks the optimal $\hat{P}_{i,j}^h$-dimensional subspace that captures the
snapshots in the least-squares sense. For more details on POD, we refer to
\cite{hinze2005proper, kunisch2002galerkin}.

Once $\bm{\chi}$ that spans the reduced port space $\hat{P}_{i,j}^h$ is
determined, the rest of the CWROM formulation is the same as the CWFOM
formulation procedure, i.e., finding the bubble function through
\eqref{eq:forcing_bubble}, the interface bubble function through
\eqref{eq:ifunc_bubble}, forming the patched interface basis through
\eqref{eq:ifunc}, assembling the system through Algorithm~\ref{alg:sc_assemble}.
Note that the number of unknowns in the CWROM becomes $\hat{n}_{SC} =
\sum_{\portindex=1}^{n^\gamma} \mathcal{N}_p^\gamma$.

Due to the truncation in the reduced port space as in
\eqref{eq:reduced_port_space}, we can decompose the degrees of
freedom in the component-wise full order model linear system, i.e.,
Eq.~\eqref{eq:cw_linear_system}, into active and inactive ones:
\begin{equation}\label{eq:active_inactive}
  \begin{aligned}
    \cwStiffness(\param) &=
    \bmat{\cwStiffness_{\activeSymbol\activeSymbol}(\param) &
        \cwStiffness_{\activeSymbol\inactiveSymbol}(\param) \\
        \cwStiffness_{\inactiveSymbol\activeSymbol}(\param) &
        \cwStiffness_{\inactiveSymbol\inactiveSymbol}(\param) } \\
    \cwSol(\param) &= \pmat{\cwSol_\activeSymbol(\param) \\
    \cwSol_\inactiveSymbol(\param)} \\
    \cwForce(\param) &= \pmat{\cwForce_\activeSymbol(\param) \\
        \cwForce_\inactiveSymbol(\param)}
  \end{aligned}
\end{equation}
where inactive degrees of freedom $\cwSol_\inactiveSymbol$ correspond to the
coefficients of functions outside the space spanned by $\rifunc{r,k}$.
Setting these degrees of freedom to zero, $\cwSol_\inactiveSymbol = \bm{0}$,
yields the reduced system
\begin{equation}\label{eq:cwrom_linear_system}
  \reduced{\cwStiffness}(\param) \reduced{\cwSol}(\param) = \reduced{\cwForce}(\param)
\end{equation}
where $\reduced{\cwStiffness}(\param) = \cwStiffness_{\activeSymbol\activeSymbol}(\param)$,
$\reduced{\cwSol}(\param) = \cwSol_{\activeSymbol}(\param)$, and
$\reduced{\cwForce}(\param) = \cwForce_{\activeSymbol}(\param)$.
The entries of $\tilde{\cwStiffness}(\param)$ are found just as in Eq. \ref{eq:cwStiffness}, but
using only the members of the reduced skeleton space:
\begin{equation}\label{eq:reduced_cwStiffness}
  \tilde{\cwStiffness}(\param)_{(i,j),(i',j')} = \bilinear{\rifunc{i,j}(\param), \rifunc{i',j'}(\param)},
\end{equation}
and the entries of $\tilde{\cwForce}(\param)$ correspondingly through Eq. \ref{eq:cwForce}.

\subsection{Simplification of the CWFOM for linear parameter dependence}\label{sec:linear_cwrom}
If the bilinear form on each reference component,
$\bilineark{r}{u, v; \paramk{r}}$ is linear in a function of the parameter
$\paramk{r}$, that is:
\begin{equation}\label{eq:linear_dependence}
  \bilineark{r}{u, v; \paramk{r}} = \Theta(\paramk{r}; \paramk{r;0}) \bilineark{r}{u, v; \paramk{r;0}},
\end{equation}
where $\paramk{r;0}$ is some reference value of $\paramk{r}$ and $\Theta: \paramDomaink{r} \to \mathbb{R}^+$
is a function that defines the parameter dependence by scaling the bilinear form,
then the component-wise formulation may be significantly simplified. This simplification is a key
contribution of the present work, as it applies to the topology optimization formulation
developed in the following section. This makes the implementation of our formulation even
more efficient than a component-wise formulation not incorporating the linear simplification.

This simplification eliminates the parameter dependence in the definition of the
interface functions, Eq. \eqref{eq:ifunc}. We do so by defining the lifted port basis functions,
$\psi_{r,j,k} \in \femSpacek{r}$, through the lifting
\begin{equation}\label{eq:elasticity_lifting}
  \begin{split}
    \bilineark{r}{\psi_{r,j,k}, v; \paramk{0}} = 0 \ \forall v \in \bubspace{r;0}{h} \\
    \psi_{r, j, k} = \portbasis{r, j, k} \text{ on }  \hat{\gamma}_{r, j} \\
    \psi_{r, j, k} = 0 \text{ on } \hat{\gamma}_{r, i}, \ i \neq j
  \end{split}
\end{equation}
where the bilinear form $a_r$ is the same one that defines the governing equation in
weak form, as decomposed in Eq. \eqref{eq:decomposed_bilinear}.
Using this definition of the lifted port basis,
and substituting Eq. \eqref{eq:linear_dependence} in Eq. \eqref{eq:ifunc_bubble}, we obtain
\begin{equation}\label{eq:trivial_bubble_function}
  a_{\compIndexMapk{i}}\left(\bub{i,j,k}{h}(\paramk{i}), v; \paramk{i}\right) =
  -a_{\compIndexMapk{i}}\left(\psi_{\compIndexMapk{i},j,k}, v; \paramk{i}\right) =
  \Theta(\paramk{i}) \bilineark{\compIndexMapk{i}}{\psi_{\compIndexMapk{i}, j, k}, v; \paramk{i,0}} =
  0,
\end{equation}
and therefore the bubble function $\bub{i,j,k}{h}$ is parameter independent and equal to zero:
$\bub{i,j,k}{h}(\paramk{i}) = 0$. From Eq. \eqref{eq:ifunc}, this also implies that the
interface basis functions $\phi^h_{\compIndexMapk{i}, j, k}$ are equal to the
lifted port bases $\psi_{\compIndexMapk{i}, j, k}$ and independent of parameter.
A Laplacian lifting as in Eq. \eqref{eq:laplace_lifting} is still possible; in this case,
from Eq. \eqref{eq:ifunc_bubble} one obtains that $\bub{i,j,k}(\paramk{i})$ is equal for
all values of $\paramk{i}$. The lifting in Eq. \eqref{eq:elasticity_lifting} eliminates the
need to solve for the bubble function entirely.

Making $\ifunc{r, j, k}$ parameter independent eliminates the need to solve
Eq. \eqref{eq:ifunc_bubble} for $\bub{i,j,k}{h}$ many times in the online phase
of the component-wise computation, resulting in large reduction in the number
of operations required in the online phase. There is yet another benefit to the linear
simplification, however; because the entries of $\cwStiffness(\param)$ are defined
by applications of the bilinear form (Eq. \eqref{eq:cwStiffness}), they share the
linearity property. That is,
\begin{equation}\label{eq:simplified-cwstiffness}
  \cwStiffness_{(p,k),(p',k')}^i(\paramk{i}) =
    \Theta(\paramk{i})\cwStiffness_{(p,k),(p',k')}^i(\paramk{i;0})
\end{equation}
Therefore, the local Schur complement matrices $\cwStiffness^i(\paramk{i})$ may be
computed for a reference value $\paramk{i;0}$ of the component parameter during the
offline phase, and in the online phase the computation of the local Schur complement
matrices given in Eq. \eqref{eq:local_schur} may be replaced by a simple scaling of
$\cwStiffness^i(\paramk{i;0})$ by $\Theta(\paramk{i})$.

This simplification eliminates most floating point operations in the online phase of
the component-wise computation except for the solution of Eq. \eqref{eq:cw_linear_system}.
Therefore the performance of the algorithm with the linear simplification in effect is
primarily limited only by the cost of assembly and of a linear solver. This simplification
is a key advantage of the component-wise topology optimization formulation that we
demonstrate below.

\section{Component-wise topology optimization}\label{sec:cwtopopt}
We introduce a compliance-based topology optimization formulation based on the component-wise model
developed above, which is particularly useful for designing an optimal lattice-type
structure that can be constructed using a small number of reference components. Our
method is a density-based TO using the solid isotropic material with penalization (SIMP)
method; however, we assign a density parameter to each component, rather than to each
element. This choice of parameterization makes the component bilinear forms linear in the
optimization parameters, and allows the application of the form given in
Eq.~\eqref{eq:simplified-cwstiffness} for the local Schur complement matrices, accelerating
both the forward computation and sensitivity calculations.

\subsection{Forward model}\label{sec:opt_fwdmodel}
The optimization parameter in our formulation is a volume fraction discretized
component-wise, denoted $\param \in [0,1]^{n_I}$. A volume fraction
$\mu_i \in [0,1]$ is assigned to each component, with a value of 0 indicating
that this component is void (omitted from the design) and a value of 1
indicating solid material. For intermediate values of $\mu_i$, the parameter
dependence of the forward model is defined through a SIMP interpolation, defined
below. In practice, we do not let $\paramSymbolk{i}$ to take zero, but a small value,
$\paramSymbolLow > 0$\footnote{we use a value of $10^{-3}$ Pa for $\paramSymbolLow$},
in order to ensure that the resulting problem is well-posed. 

In this work, the forward model is linear elasticity. When expressed in weak form
(\ref{eq:weakform}), the bilinear form is given as
\begin{equation}
  \label{eq:opt_bilin}
    \bilinear{u_h,v;\param} = \int_\sysDomain s(\mu)\elasticityTensor\left[\nabla
    u_h\right] \cdot \nabla v dx
\end{equation}
where $\elasticityTensor$ denotes the symmetric elasticity tensor and $s: \RR{}
\to \RR{}$ denotes a SIMP (Solid Isotropic Material with Penalization) function,
which is defined, for a given exponent $p \in \RR{}_+$, as
\begin{equation}
  s(\mu) \equiv \mu^p + (1 - \mu^p)\frac{E_{min}}{E_0}
\end{equation}
where $E_{min} \in \RR{}_+$ is the minimum Young's modulus, and $E_0 \in \RR{}_+$
is the Young's modulus of fully solid material. In our formulation, $s(\mu)$ is taken
to be piecewise constant, constant over each component.

The linear form is defined by
\begin{equation}
  \label{eq:opt_lin}
  f(v) = \int_\sysDomain f_h \cdot v dx
\end{equation}
where $f_h \in L^2(\sysDomain)$ is the discretized external forcing.

When the bilinear form in Eq. \ref{eq:opt_bilin} is decomposed as in Eq.
\ref{eq:decomposed_bilinear}, it may be written as
\begin{equation}
  \label{eq:decomposed_stiffness}
  a(u_h, v; \param) = \sum_{i=1}^{\numInsta} s(\mu_i)
  \int_{\sysDomaink{i}} \elasticityTensor\left[
  \nabla u_h|_{\sysDomaink{i}}\right] \cdot \nabla v dx
  = \sum_{i=1}^{\numInsta} s(\mu_i) \sbilineark{\compIndexMapk{i}}{u_h|_{\sysDomaink{i}},
  v|_{\sysDomaink{i}}}
\end{equation}
because $s(\mu)$ is constant on $\sysDomaink{i}$,
where $\sbilinearSymbol_{\compIndexMapk{i}}: \femSpacek{i} \times \femSpacek{i} \mapto \RR{}$ is
defined as $\sbilineark{\compIndexMapk{i}}{\solSymbol,\dummyFunc} \equiv \int_{\Omega_i}
\elasticityTensor\left[\nabla \solSymbol\right] \cdot \nabla \dummyFunc dx$ and
is independent of the optimization variable $\param$. Therefore, the simplification
from Section \ref{sec:linear_cwrom} applies and may be used to accelerate the model
evaluations during the optimization iteration.

With the forward model now expressed in the form given in (\ref{eq:weakform}), we may
apply either the CWFOM or the CWROM to solve the model and obtain a solution $u_h(\param)$
or $\reduced{u}_h(\param)$, respectively, along with the objective function to be defined
below, and its sensitivity.

\subsection{Optimization formulation}\label{sec:opt_formulation}
We consider a structural compliance minimization problem on a lattice structure subject
to a volume constraint, which is formulated as:
\begin{equation}
  \label{eq:opt_formulation}
  \begin{split}
    \minimize_{\param \in \RR{\numInsta}} &\quad \compliance(\sol(\param), \param) \\
    \text{subject to} &\quad\ g_0(\param) = \sum_i \mu_i v_i - v_u \leq 0 \\
    & \quad\paramSymbolLow \leq \paramSymbolk{i} \leq 1, \quad i\in\nat{\numInsta}
  \end{split}
\end{equation}
where $\compliance: \femSpace(\sysDomain) \times \RR{\numInsta} \to \RR{}$ will be defined below,
$v_i$ denotes the volume of component $\sysDomain_i$, and $v_u$ denotes the upper bound
for the total volume of the material. Thus the constraint $g_0 \leq 0$ in
(\ref{eq:opt_formulation}) is a limit on the volume (mass) of the system. The state
vector $\sol(\param)$ is found by solving the CWROM for the forward model
presented in Section \ref{sec:opt_fwdmodel}.

The compliance is defined as
\begin{equation}\label{eq:compliance}
  \begin{aligned}
    \compliance\left(u_h(\param); \param\right) &=  \int_\sysDomain
    s(\mu)\elasticityTensor\left[\nabla u_h\right] \cdot \nabla u_h dx    \\
    &=\bilinear{u_h, u_h;\param},
  \end{aligned}
\end{equation}
where the second equality holds by Eq.~\eqref{eq:opt_bilin}. 
Plugging Eq.~\eqref{eq:scrbe_sol} into Eq.~\eqref{eq:compliance}, we obtain the
following equivalent linear algebraic form of compliance:
\begin{equation}\label{eq:discrete-compliance-1}
  \begin{split}
    \dcompliance\left(\cwSol(\param), \dfbubk{1}(\paramSymbolk{1}),\ldots,
    \dfbubk{\numInsta}(\paramSymbolk{\numInsta}); \param\right) = \\
    \cwSol(\param)^T\cwStiffness(\param)\cwSol(\param) + \sum_{i=1}^{\numInsta}
    \lineark{\compIndexMapk{i}}{\bub{i}{f;h}(\paramSymbolk{i}); \paramSymbolk{i}} +
    \sum_{i=1}^{\numInsta} \sum_{p=1}^{n^\gamma}\sum_{k=1}^{\spaceDimension{p}{\gamma}}
    \cwSolk{p,k} \bilineark{i}{\bub{i}{f}(\mu_i), \ifunc{p,k}; \mu_i}
  \end{split}
\end{equation}
\begin{equation}\label{eq:discrete-compliance}
  = \cwForce^T\cwSol(\param) + \sum_{i=1}^{\numInsta} \femForcek{i}^T
    \dfbubk{i}(\paramSymbolk{i})
\end{equation}
where $\cwStiffness$, $\cwForce$ and $\cwSol$ are defined in
Eqs.~\eqref{eq:cw_linear_system} - \eqref{eq:cwForce}. $\femForcek{i}$ and
$\dfbubk{i}$ are component-level quantities: $\dfbubk{i}(\paramSymbolk{i}) \in
\RR{\spaceDimension{\compIndexMapk{i}}{h}}$ is a vector of coefficients of
$\bub{i}{f;h}$ in the basis of the bubble space
$\bubspace{\compIndexMapk{i};0}{h}$, and $\femForcek{i} \in
\RR{\spaceDimension{\compIndexMapk{i}}{h}}$ is the vector discretizing the
linear form such that $f(v) = \femForcek{i}^T \boldsymbol{v}$, $\forall v \in
\bubspace{\compIndexMapk{i};0}{h}$, with $\boldsymbol{v}$ the vector of
coefficients of $v$ in the bubble space basis, as for $\dfbubk{i}$. Here we have
assumed that the forcing is independent of the parameter $\param$, as it is in
the numerical examples presented below. Note that $\cwForce$ for our particular
choice of optimization variable, i.e., volume fraction, is independent of
$\param$ as well because the bilinear form depends linearly on $s(\param)$. The
linear form appears in the second term in Eq.~\eqref{eq:discrete-compliance-1}
due to Eq.~\eqref{eq:forcing_bubble}. The last term in
Eq.~\eqref{eq:discrete-compliance-1} vanishes because of our use of the
elasticity lifting, Eq.~\eqref{eq:elasticity_lifting}, which means that the
bilinear form applied to an interface function and any function in the bubble
space is identically zero.

Analogously to the decomposition of the bilinear form in Eq.~\eqref{eq:decomposed_bilinear},
the compliance may also be decomposed as
\begin{equation}\label{eq:decomposed_compliance}
  \compliance\left(\sol(\param); \param\right) = \sum_{i=1}^{\numInsta}
  \compliance_{i}\left(\sol(\paramSymbolk{i})|_{\sysDomaink{i}};
  \paramSymbolk{i}\right)
\end{equation}
where each component compliance $\compliancek{i}: X^h_{\compIndexMapk{i}} \times
\RR{} \to \RR{}$ is defined as
\begin{equation}\label{eq:comp_compliance_def}
  \begin{aligned}
    \compliancek{i}\left(\sol(\paramSymbolk{i})|_{\sysDomaink{i}};
    \paramSymbolk{i}\right) &\equiv \int_{\sysDomaink{i}} s(\paramSymbolk{i})
    \elasticityTensor\left[ \nabla
    \sol(\paramSymbolk{i})|_{\sysDomaink{i}}\right] \cdot \nabla
    \sol(\paramSymbolk{i})|_{\sysDomaink{i}}\ dx \\
    &= \bilineark{i}{\sol(\paramSymbolk{i})|_{\sysDomaink{i}},
    \sol(\paramSymbolk{i})|_{\sysDomaink{i}}; \paramSymbolk{i}} \\
    &= s(\paramSymbolk{i})
    \sbilineark{i}{\sol(\paramSymbolk{i})|_{\sysDomaink{i}},
    \sol(\paramSymbolk{i})|_{\sysDomaink{i}}}
  \end{aligned}
\end{equation}

From the form of $\reduced{u}_h(\param)$ given in \eqref{eq:reduced_sol} and
the definition of $\reduced{\cwStiffness}$ in Eq.~\eqref{eq:reduced_cwStiffness},
we note that the component compliance can be equivalently written in the following
linear algebraic form as $\dcompliancek{i}: \RR{\mathcal{N}_i} \times \RR{} \to \RR{}$:
\begin{equation}\label{eq:comp_compliance_la}
  \begin{aligned}
    \dcompliancek{i}\left(\cwSolk{i}(\param), \dfbubk{i}(\paramSymbolk{i});
    \param\right)
    &= \cwSolk{i}(\param)^T \cwStiffness^i(\paramSymbolk{i})
    \cwSol_i(\param) + \femForcek{i}^T \dfbubk{i}(\paramSymbolk{i}) \\
    &= s(\paramSymbolk{i}) \cwSolk{i}(\param)^T \scwStiffness^i
    \cwSol_i(\param) + \femForcek{i}^T \dfbubk{i}(\paramSymbolk{i})
  \end{aligned}
\end{equation}
where $\mathcal{N}_i \equiv \sum_{j=1}^{n_{\compIndexMapk{i}}^\gamma}
\mathcal{N}_{\compIndexMapk{i},j}^\gamma$ denotes the total number of degrees of
freedom in all the ports of the $i$th instantiated component and $\cwSol_i \in
\RR{\mathcal{N}_i}$ denotes the coefficient vector in the $i$th component whose
entries consist of $\cwSol_{\globpmapk{i}(j),k}, j \in
\nat{n_{\compIndexMapk{i}}^\gamma}, k \in
\nat{\mathcal{N}_{\compIndexMapk{i},j}^\gamma}$, and $\cwStiffness^i \in
\RR{\mathcal{N}_i \times \mathcal{N}_i}$ denotes the $i$th stiffness matrix
whose entries consist of $\cwStiffness_{(j,k),(j',k')}^i$ defined in
(\ref{eq:local_schur}). Because of the linear dependence of the bilinear form
on $s(\mu)$, we make use of the development in Sec.~\ref{sec:linear_cwrom} and define
a parameter-independent component stiffness matrix:
\begin{equation}\label{eq:independent_cwStiffness}
  \scwStiffness_{(j,k),(j',k')}^{i} =
  \sbilineark{\compIndexMapk{i}}{\phi_{i,j,k}^h, \phi_{i,j',k'}^h}
\end{equation}
where the interface functions do not depend on $\mu_i$; this parameter-independent
stiffness appears in the second equivalence in equation \eqref{eq:comp_compliance_la}.

We also require the sensitivity of the compliance objective to $\param$ for use in a
gradient-based optimization. To derive the sensitivity, we begin by defining the following
residuals:
\begin{equation}\label{eq:cwresidual}
  \begin{aligned}
    \zerovec = \cwRes(\cwSol(\param);\param) &\equiv \cwForce -
    \cwStiffness(\param)\cwSol(\param)
  \end{aligned}
\end{equation}
\begin{equation}\label{eq:forcebubbleresidual}
  \begin{aligned}
    \zerovec = \bubres{i}(\dfbubk{i}(\paramSymbolk{i}); \paramSymbolk{i}) &\equiv
    \femStiffnessk{i}(\paramSymbolk{i})\dfbubk{i}(\paramSymbolk{i}) - \femForcek{i}
  \end{aligned},
\end{equation}
where $\femStiffnessk{i}(\paramSymbolk{i}) \in \RR{\spaceDimension{\compIndexMapk{i}}{h}
\times\spaceDimension{\compIndexMapk{i}}{h}}$ is the finite element stiffness matrix
discretizing $a_i(u,v;\paramSymbolk{i})$ for $u, v \in \bubspace{\compIndexMapk{i};0}{h}$.

The sensitivity of compliance is given by
\begin{equation}\label{eq:comp-sensitivity} 
  \begin{aligned}
    \frac{d\dcompliance}{d\param} &=
    \frac{\partial\dcompliance}{\partial\param} +
    \frac{\partial\dcompliance}{\partial\cwSol}
    \frac{d\cwSol}{d\param} +
    \sum_{i=1}^{\numInsta} \frac{\partial\dcompliance}{\partial\dfbubk{i}}
    \frac{d\dfbubk{i}}{d\paramSymbolk{i}}\canonicalk{i}^T
  \end{aligned}
\end{equation}
(note that $\dfbubk{i}$ depends only on $\paramSymbolk{i}$).
The expression for $\frac{d\cwSol}{d\param}$ can be obtained from the
derivative of the residual \eqref{eq:cwresidual}:
\begin{equation}\label{eq:res-sensitivity}
  \begin{aligned}
    \zerovec = \frac{d\cwRes}{d\param} &=
    \frac{\partial\cwRes}{\partial\param} +
    \frac{\partial\cwRes}{\partial\cwSol}
    \frac{d\cwSol}{d\param},
  \end{aligned}
\end{equation}
from which follows
\begin{equation}\label{eq:direct-sensitivity}
  \frac{d\cwSol}{d\param} = -\left (\frac{\partial\cwRes}{\partial\cwSol}
  \right)^{-1} \frac{\partial\cwRes}{\partial\param}.
\end{equation}
Plugging Eq.~\eqref{eq:direct-sensitivity} into the second term in 
Eq.~\eqref{eq:comp-sensitivity}, we obtain
\begin{equation}\label{eq:comp-sensitivity2} 
  \begin{aligned}
    \frac{\partial\dcompliance}{\partial\cwSol}
    \frac{d\cwSol}{d\param} = -2\cwStiffness(\param)\cwSol(\param)
    \left(\frac{\partial\cwRes}{\partial\cwSol}\right)^{-1} \frac{\partial\cwRes}{\partial\param}
    = -\lagrangeMultiplier^T \frac{\partial\cwRes}{\partial\param},
  \end{aligned}
\end{equation}
where the Lagrange multiplier, $\lagrangeMultiplier\in\RR{}$, can be obtained by
solving the following adjoint problem:
\begin{equation}\label{eq:adjoint-problem}
   \left( \frac{\partial\cwRes}{\partial\cwSol} \right)^T\lagrangeMultiplier =
   \cwStiffness(\param)\lagrangeMultiplier =
   \frac{\partial\dcompliance}{\partial\cwSol} =
   2\cwStiffness(\param)\cwSol(\param).
\end{equation}
Using the definition of the residual,
Eq.~\eqref{eq:cwresidual}, and noting that $\lagrangeMultiplier = 2\cwSol$, the
sensitivity becomes:
\begin{equation}\label{eq:comp-sensitivity3} 
  \begin{aligned}
    \frac{d\dcompliance}{d\param} &= \frac{\partial\dcompliance}{\partial\param} -2\cwSol(\param)^T 
    \frac{d\cwStiffness}{d\param}\cwSol(\param) +
    \sum_{i=1}^{\numInsta} \frac{\partial\dcompliance}{\partial\dfbubk{i}}
    \frac{d\dfbubk{i}}{d\paramSymbolk{i}}\canonicalk{i}^T
  \end{aligned}
\end{equation}

The second term in Eq.~\eqref{eq:comp-sensitivity3} may be simplified similarly
using the residual defined in Eq.~\eqref{eq:forcebubbleresidual}:
\begin{equation}\label{eq:bub-sensitivity1}
  \zerovec = \frac{d\bubres{i}}{d\paramSymbolk{i}} =
  \frac{\partial\bubres{i}}{\partial\paramSymbolk{i}} +
  \frac{\partial\bubres{i}}{\partial\dfbubk{i}}\frac{d\dfbubk{i}}{d\paramSymbolk{i}}
\end{equation}
\begin{equation}\label{eq:bub-sensitivity2}
  \frac{d\dfbubk{i}}{d\paramSymbolk{i}} =
  -\left(\frac{\partial\bubres{i}}{\partial\dfbubk{i}}\right)^{-1}
  \frac{\partial\bubres{i}}{\partial\paramSymbolk{i}}
\end{equation}
From Eq.~\eqref{eq:bub-sensitivity2} we obtain the following sensitivity of compliance
to the forcing bubble functions:
\begin{equation}\label{eq:bub-sensitivity3}
  \frac{\partial\dcompliance}{\partial\dfbubk{i}}\frac{d\dfbubk{i}}{d\param} =
  -\bubLagrangeMultiplier{i}^T \frac{\partial\bubres{i}}{\partial\paramSymbolk{i}}
  \canonicalk{i}^T
\end{equation}
with the Lagrange multipliers $\bubLagrangeMultiplier{i}$ given through the solution of the
adjoint problems
\begin{equation}
  \frac{\partial\bubres{i}}{\partial\dfbubk{i}} \bubLagrangeMultiplier{i} =
  \frac{\partial\dcompliance}{\partial\dfbubk{i}},
\end{equation}
yielding $\bubLagrangeMultiplier{i} = 2\dfbubk{i}$; this in turn implies
\begin{equation}
  \frac{\partial\dcompliance}{\partial\dfbubk{i}}\frac{d\dfbubk{i}}{d\param} =
  -2\dfbubk{i}(\paramSymbolk{i})^T \frac{\partial\femStiffnessk{i}}{\partial\paramSymbolk{i}}
  \dfbubk{i}(\paramSymbolk{i}) \canonicalk{i}^T,
\end{equation}
finally leading to the following form for the sensitivity of compliance:
\begin{equation}\label{eq:comp-sens-final}
  \frac{d\dcompliance}{d\param} = -\cwSol(\param)^T \frac{d\cwStiffness}{d\param}
  \cwSol(\param) - 
  \sum_{i=1}^{\numInsta} \dfbubk{i}(\paramSymbolk{i})^T \frac{\partial\femStiffnessk{i}}{\partial\paramSymbolk{i}}
  \dfbubk{i}(\paramSymbolk{i}) \canonicalk{i}^T
\end{equation}

However, because of the structure of $\cwStiffness(\param)$, the dependence of
$\dcompliance$ on $\paramSymbolk{i}$ may be expressed purely in terms of quantities
defined for instantiated component $i$:
\begin{equation}
  \begin{aligned}
    \frac{d\dcompliance}{d\paramSymbolk{i}} &= -\frac{ds}{d\mu_i}
    \left(\cwSol_i(\param)^T \scwStiffness^i \cwSol_i(\param) +
      \dfbubk{i}(\paramSymbolk{i})^T \sfemStiffnessk{i}\dfbubk{i}(\paramSymbolk{i})
    \right)
  \end{aligned}
\end{equation}
where we have again made use of the linearity of $a_i$ with respect to
$\paramSymbolk{i}$ to define a parameter-independent component stiffness matrix
$\sfemStiffnessk{i} = \femStiffnessk{i}(\boldsymbol{1})$. Thanks to this component-wise
decomposition, the computation of compliance is accelerated and may be parallelized in
the same fashion as the assembly of the static condensation system to solve the forward
problem. We note that the terms including $\dfbubk{i}(\paramSymbolk{i})$ are only
non-zero when there is a forcing applied on the part of a component's domain not
including port domains. In many cases, including the numerical examples presented here,
this contribution disappears for all but a few components; in our examples, forcing is
only applied on port domains, eliminating these terms entirely.

The sensitivity computation of the volume constraint $g_0(\param)$ in
(\ref{eq:opt_formulation}) is straightforward and omitted here. These
sensitivities, along with the forward model evaluation, can be used in any
gradient-based optimization solver, such as the method of moving asymptotes
(MMA) \cite{svanberg1987mma}, the interior-point method
\cite{forsgrenGill1998SIAM, wachter2006opt, petra2018newton}, and the sequential
quadratic programming method \cite{boggs1995seq,gill2005snopt} to solve the
optimization problem (\ref{eq:opt_formulation}).

\subsubsection{Optimization in the CWROM context}
The discussion above applies to the CWROM case as well, with the ROM quantities
substituted for their full-order versions. To distinguish full order and
reduced order quantities, we define a reduced order compliance objective:
\begin{equation}\label{eq:reduced_compliance}
  \rcompliance(\rsol(\param); \param) = a(\rsol, \rsol; \param)
\end{equation}
which may be equivalently written in linear algebraic form as
\begin{equation}\label{eq:rcompliance_la}
  \rcompliance(\param) = \reduced{\cwForce}^T \reduced{\cwSol}(\param)
\end{equation}
Following the derivation above, we also have that the sensitivity of the reduced
compliance is given by
\begin{equation}\label{eq:rcompliance_sens}
  \frac{d\rcompliance}{d\mu_i} = -\frac{ds}{d\mu_i}\left(
  \reduced{\cwSol}_i(\mu_i)^T \hat{\cwStiffness}^i \reduced{\cwSol}_i(\mu_i)
  + \dfbubk{i}(\paramSymbolk{i})^T \sfemStiffnessk{i}\dfbubk{i}(\paramSymbolk{i})\right)
\end{equation}
where the parameter-independent reduced component stiffness matrix is given by
\begin{equation}
  \hat{\cwStiffness}^i_{(j,k),(j',k')} =
  \sbilineark{\compIndexMapk{i}}{\reduced{\phi^h}_{i,j,k}, \reduced{\phi^h}_{i, j',k'}}
\end{equation}

\section{Error bounds}\label{sec:error}
We are interested in deriving the various error bounds. First, the energy norm,
$\energyNorm{\cdot}: \HH(\sysDomain)\mapto\RR{}$, is defined as
$\energyNorm{\dummyFunc} \equiv \sqrt{\bilinear{\dummyFunc,\dummyFunc;\param}}$,
$\forall \dummyFunc\in\HH(\sysDomain)$.  Then we define the following error
quantities: the solution error, $\solerr: \paramDomain\mapto\HH(\sysDomain)$, is
defined as $\solerr(\param) \equiv \sol(\param) - \reducedSolution(\param)$.
The compliance error, $\compErr: \paramDomain\mapto\RR{}$, is defined as
$\compErr(\param) \equiv \compliance\left(\sol(\param); \param\right) -
\acompliance\left(\reducedSolution(\param); \param\right)$.  Finally, the
component compliance sensitivity error, $\compSensErr:\paramDomain\mapto\RR{}$,
is defined as $\compSensErr(\param) \equiv \frac{d\dcompliance}{d\param}(\cwSol)
- \frac{d\dacompliance}{d\param}(\reducedcwSol)$.

The following error bounds will be derived:
\begin{itemize}
  \item Solution error: $\energyNorm{\solerr(\param)} \leq
    \frac{c_2}{\sigma_{\cwStiffness,\text{min}}}\ltwoNorm{\cwRes} $ 
  \item Compliance error: $\absNorm{\compErr(\param)} \leq
    \frac{c_2\ltwoNorm{\cwForce}}{c_1\sigma_{\cwStiffness,\text{min}}}
    \ltwoNorm{\cwRes} $ 
  \item Compliance sensitivity error: $\ltwoNorm{\compSensErr(\param)}\leq
    \frac{\stiffnessSensitivityNorm}{\sigma_{\cwStiffness,\text{min}}^2}
    \ltwoNorm{\cwRes}^2 +
    2\frac{\stiffnessSensitivityNorm\normreducedCWsol}{\sigma_{\cwStiffness,\text{min}}}
    \ltwoNorm{\cwRes}$,
\end{itemize}
where each constant in front of the residual norm will be defined later when
each theorem is stated.

Before stating the derivation of these bounds, we note that all terms involving
the forcing bubble functions $\bub{i}{f}$ vanish; this is because the bubble functions
are not approximated in our method (although they are in the original SCRBE method),
and thus each term containing only bubble functions cancels when subtracted from the
FOM quantity. The component-wise form of the displacement field, Eq. \eqref{eq:extended_reduced_sol},
the compliance, and the compliance sensitivity all have terms containing only bubble
functions or only reduced order quantities; therefore, bubble function terms appear
nowhere in the derivation of these bounds.

Before diving into the derivation of the error bounds above, we first note that
the reduced solution in \eqref{eq:reduced_sol} can be re-written as the
following extended form:
\begin{equation}\label{eq:extended_reduced_sol}
  \reducedSolution(\param) = \sum_{i=1}^{\numInsta} \bub{i}{f;h}(\paramk{i}) +
  \sum_{p=1}^{n^\gamma} \sum_{k=1}^{\mathcal{N}_p^\gamma}
  \extendedCWsolk{p,k}(\param)\interfacek{p,k}(\param),
\end{equation}
where the value of $\extendedCWsolk{p,k}$ is determined by 
\begin{equation}\label{eq:extendedCWsol}
  \begin{split}
    \extendedCWsolk{p,k} = \reducedCWsolk{p,k} &\quad (p,k) \in \portReducedSet
    \\
    \extendedCWsolk{p,k} = 0 &\quad (p,k) \notin \portReducedSet,
  \end{split}
\end{equation}
where $\portReducedSet$ is the set of the port and its degree of freedom pairs
that are selected in the port reduction. Subtracting
\eqref{eq:extended_reduced_sol} from \eqref{eq:scrbe_sol}, the solution error is
expressed as 
\begin{equation}
  \label{eq:sol_error}
  \solerr(\param) = \sum_{p=1}^{n^\gamma} \sum_{k=1}^{\mathcal{N}_p^\gamma}
  (\CWsolk{p,k}(\param)-\extendedCWsolk{p,k})(\param)\interfacek{p,k}(\param).
\end{equation}
Note that $\solerr(\param)\in\skeletonSpace$, where $\skeletonSpace$ is defined
in \eqref{eq:skeleton}. Based on the definition of the component-wise stiffness
matrix in Eq.~\eqref{eq:cwStiffness}, the energy norm of the solution error is
the same as the $\cwStiffness$-induced norm of the error in component-wise
coefficient error, i.e., 
\begin{equation}\label{eq:equivalenceNorm}
  \energyNorm{\solerr(\param)} = \KinducedNorm{\cwsolerr(\param)}, 
\end{equation}
where the component-wise coefficient error,
$\cwsolerr:\paramDomain\mapto\RR{n_{SC}}$, is defined as $\cwsolerr(\param)
\equiv \cwSol(\param) - \extendedCWsol(\param)$ and the $\cwStiffness$-induced
norm, $\KinducedNorm{\cdot}:\RR{n_{SC}}\mapto\RR{}$, is defined as
$\KinducedNorm{\dummyVec} \equiv \sqrt{\dummyVec^T\cwStiffness\dummyVec}$,
$\forall \dummyVec\in\RR{n_{SC}}$.
In vector form, the extended component-wise solution, $\extendedCWsol(\param)$,
is nothing more than 
\begin{equation}\label{eq:extended_cwsol}
  \extendedCWsol(\param) = \pmat{\cwSol_{\activeSymbol} \\ \zerovec},
\end{equation}
where $\cwSol_{\activeSymbol}$ can be obtained by solving
Eq.~\eqref{eq:cwrom_linear_system}.  Now, the extended component-wise solution,
$\extendedCWsol(\param)$, will make the residual non-zero, so we define the
corresponding residual, $\cwRes:\paramDomain\mapto\RR{n_{SC}}$ as
\begin{equation}\label{eq:extendedcwresidual}
  \begin{aligned}
    \cwRes(\extendedCWsol(\param);\param) &= \cwForce(\param) -
    \cwStiffness(\param)\extendedCWsol(\param) \\
    &= \cwStiffness(\param)\cwsolerr(\param),
  \end{aligned}
\end{equation}
where the equality of the second line above is due to
Eq.~\eqref{eq:cw_linear_system}.

\begin{thm}\label{thm:sol-errorbound}
  {\bf A posteriori residual-based error bound for solution state} 
  Let $c_2>0$
  the norm equivalence constant, i.e., $\KinducedNorm{\cdot} \leq c_2
  \ltwoNorm{\cdot}$, and the minimum singular value of $\cwStiffness$ is denoted
  as $\sigma_{\cwStiffness,\text{min}}>0$, then for any given $\param \in
  \RR{\numInsta}$, the following a posteriori error bound holds:
  \begin{equation}\label{eq:solerrbound}
    \energyNorm{\solerr(\param)} \leq
    \frac{c_2}{\sigma_{\cwStiffness,\text{min}}} \ltwoNorm{\cwRes} 
  \end{equation}
\end{thm}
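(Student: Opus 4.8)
The plan is to assemble a short chain of inequalities from ingredients already in hand: the norm equivalence \eqref{eq:equivalenceNorm}, the residual identity \eqref{eq:extendedcwresidual}, and elementary finite-dimensional linear algebra. No genuinely hard step is expected; the argument is essentially the classical a posteriori residual bound for SPD linear systems, specialized to the component-wise skeleton coefficients, so I would keep the proof to a few lines.

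First I would rewrite the left-hand side using \eqref{eq:equivalenceNorm}, giving $\energyNorm{\solerr(\param)} = \KinducedNorm{\cwsolerr(\param)}$, which reduces the claim to a bound on the $\cwStiffness$-induced norm of the coefficient error $\cwsolerr(\param)$. Because $\cwStiffness(\param)$ is SPD (the Remark after Algorithm~\ref{alg:sc_assemble}), $\KinducedNorm{\cdot}$ is a genuine norm on the finite-dimensional space $\RR{n_{SC}}$; hence it is equivalent to $\ltwoNorm{\cdot}$, and a constant $c_2 > 0$ — the one fixed in the theorem statement — with $\KinducedNorm{\dummyVec} \le c_2 \ltwoNorm{\dummyVec}$ for all $\dummyVec \in \RR{n_{SC}}$ exists (one may take $c_2$ to be the square root of the largest singular value of $\cwStiffness(\param)$). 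Applying this yields $\KinducedNorm{\cwsolerr(\param)} \le c_2 \ltwoNorm{\cwsolerr(\param)}$.

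Second, I would invoke the residual identity \eqref{eq:extendedcwresidual}, namely $\cwRes = \cwStiffness(\param)\cwsolerr(\param)$. Since $\cwStiffness(\param)$ is SPD it is invertible, so $\cwsolerr(\param) = \cwStiffness(\param)^{-1}\cwRes$ and $\ltwoNorm{\cwsolerr(\param)} \le \ltwoNorm{\cwStiffness(\param)^{-1}}\,\ltwoNorm{\cwRes}$ with the matrix norm understood as the spectral norm. The spectral norm of the inverse of an SPD matrix equals the reciprocal of its smallest singular value, so $\ltwoNorm{\cwStiffness(\param)^{-1}} = 1/\sigma_{\cwStiffness,\text{min}}$. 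Chaining the steps, $\energyNorm{\solerr(\param)} = \KinducedNorm{\cwsolerr(\param)} \le c_2 \ltwoNorm{\cwsolerr(\param)} \le \frac{c_2}{\sigma_{\cwStiffness,\text{min}}}\,\ltwoNorm{\cwRes}$, which is \eqref{eq:solerrbound}.

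The closest thing to an obstacle is bookkeeping rather than mathematics: one should recall that the forcing-bubble contributions cancel in $\solerr(\param)$ (as noted before the theorem), so that $\solerr(\param) \in \skeletonSpace$ and \eqref{eq:equivalenceNorm} applies verbatim; and one should note that $c_2$ and $\sigma_{\cwStiffness,\text{min}}$ depend on $\param$ in general, so the stated bound is pointwise in $\param$ unless one additionally assumes, e.g., compactness of $\paramDomain$ together with continuity of $\cwStiffness$ in $\param$ to extract a $\param$-uniform constant.
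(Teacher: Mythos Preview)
Your proof is correct and follows essentially the same approach as the paper: invoke \eqref{eq:equivalenceNorm} to pass to the $\cwStiffness$-induced norm, apply the norm-equivalence constant $c_2$, and use the residual identity \eqref{eq:extendedcwresidual} together with $\ltwoNorm{\cwStiffness^{-1}} = 1/\sigma_{\cwStiffness,\text{min}}$ to bound $\ltwoNorm{\cwsolerr(\param)}$. Your write-up is more explicit than the paper's (which compresses the argument to two lines), and your closing remark about the $\param$-dependence of $c_2$ and $\sigma_{\cwStiffness,\text{min}}$ is a useful observation not made there.
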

\begin{proof}
  By the h\"{o}lder's inequality and Eq.~\eqref{eq:extendedcwresidual}, we have
  \begin{equation}\label{eq:holder}
    \ltwoNorm{\cwsolerr(\param)} \leq \ltwoNorm{\cwStiffness^{-1}}
    \ltwoNorm{\cwRes}.
  \end{equation}
  Then, the desired error bound follows due to the norm equivalence relation and
  Eq.~\eqref{eq:equivalenceNorm}.
\end{proof}
\begin{thm}\label{thm:compliance-errorbound}
  {\bf A posteriori residual-based error bound for compliance objective
  function} Let $c_1$, $c_2>0$ the norm equivalence constants, i.e., $c_1
  \ltwoNorm{\cdot} \leq \KinducedNorm{\cdot} \leq c_2 \ltwoNorm{\cdot}$, and the
  minimum singular value of $\cwStiffness$ is denoted as
  $\sigma_{\cwStiffness,\text{min}}>0$, then for any given $\param \in
  \RR{\numInsta}$, the following a posteriori error bound for the compliance
  objective function holds:
  \begin{equation}\label{eq:solerrbound}
    \absNorm{\compErr(\param)} \leq
    \frac{c_2\ltwoNorm{\cwForce}}{c_1\sigma_{\cwStiffness,\text{min}}}
    \ltwoNorm{\cwRes} 
  \end{equation}
\end{thm}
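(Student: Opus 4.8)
The plan is to reduce the compliance error to a Euclidean pairing of the load vector $\cwForce$ with the component-wise coefficient error $\cwsolerr$, and then invoke Cauchy--Schwarz together with the solution-error estimate of Theorem~\ref{thm:sol-errorbound}.

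First I would write both compliances in linear-algebraic form. By Eq.~\eqref{eq:discrete-compliance}, $\compliance(\sol(\param);\param) = \cwForce^T\cwSol(\param) + \sum_{i=1}^{\numInsta}\femForcek{i}^T\dfbubk{i}(\paramSymbolk{i})$; writing $\acompliance(\reducedSolution(\param);\param)$ via the extended representation \eqref{eq:extended_reduced_sol} gives the analogous expression $\reduced{\cwForce}^T\reduced{\cwSol}(\param) + \sum_{i=1}^{\numInsta}\femForcek{i}^T\dfbubk{i}(\paramSymbolk{i})$, since the forcing bubble functions are not reduced (the mixed term in \eqref{eq:discrete-compliance-1} vanishes in both cases under the elasticity lifting \eqref{eq:elasticity_lifting}). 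Subtracting, the bubble terms cancel identically, so $\compErr(\param) = \cwForce^T\cwSol(\param) - \reduced{\cwForce}^T\reduced{\cwSol}(\param)$. Using $\extendedCWsol = (\cwSol_{\activeSymbol};\zerovec)$ from \eqref{eq:extended_cwsol} together with $\reduced{\cwForce} = \cwForce_{\activeSymbol}$ and $\reduced{\cwSol} = \cwSol_{\activeSymbol}$, the inactive block contributes nothing and $\reduced{\cwForce}^T\reduced{\cwSol} = \cwForce^T\extendedCWsol$, hence $\compErr(\param) = \cwForce^T(\cwSol(\param) - \extendedCWsol(\param)) = \cwForce^T\cwsolerr(\param)$.

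Given this identity, Cauchy--Schwarz yields $\absNorm{\compErr(\param)} \le \ltwoNorm{\cwForce}\,\ltwoNorm{\cwsolerr(\param)}$, so it remains to bound $\ltwoNorm{\cwsolerr}$. The lower norm-equivalence bound gives $\ltwoNorm{\cwsolerr} \le c_1^{-1}\KinducedNorm{\cwsolerr}$, and $\KinducedNorm{\cwsolerr} = \energyNorm{\solerr}$ by \eqref{eq:equivalenceNorm}; Theorem~\ref{thm:sol-errorbound} then supplies $\energyNorm{\solerr} \le (c_2/\sigma_{\cwStiffness,\text{min}})\ltwoNorm{\cwRes}$. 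Chaining the three inequalities gives exactly $\absNorm{\compErr(\param)} \le \frac{c_2\ltwoNorm{\cwForce}}{c_1\sigma_{\cwStiffness,\text{min}}}\ltwoNorm{\cwRes}$.

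The only genuinely delicate step is the first paragraph: one must verify that the reduced compliance, expressed against the \emph{full} skeleton basis by means of the extended coefficient vector, is precisely $\cwForce^T\extendedCWsol$, and that every term involving only forcing bubble functions cancels between the FOM and ROM compliances. After that bookkeeping the estimate is routine; note that, in contrast with the solution-error bound, here both equivalence constants appear --- $c_2$ entering through Theorem~\ref{thm:sol-errorbound} and $c_1$ through the conversion from the $\cwStiffness$-induced norm back to the Euclidean norm of $\cwsolerr$.
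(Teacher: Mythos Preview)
Your proof is correct and follows essentially the same route as the paper: express the compliance error as $\cwForce^T(\cwSol-\extendedCWsol)$ after cancelling the bubble-function contributions, apply Cauchy--Schwarz (the paper says H\"older, which for $\ell^2$ is the same thing), convert $\ltwoNorm{\cwsolerr}$ to $\KinducedNorm{\cwsolerr}$ via the lower norm-equivalence constant $c_1$, and then invoke Theorem~\ref{thm:sol-errorbound}. Your write-up is slightly more explicit than the paper's about why $\reduced{\cwForce}^T\reduced{\cwSol}=\cwForce^T\extendedCWsol$ and why the bubble terms drop, but the argument is the same.
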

\begin{proof}
  By Eq.~\eqref{eq:discrete-compliance}, the compliance error can be written as
  \begin{equation}\label{eq:discrete_compliance_err}
    \begin{aligned}
      \compErr(\param) &= \cwForce^T\cwSol -
      \cwForce_{\activeSymbol}^T\cwSol_{\activeSymbol}\\
        &=\cwForce^T(\cwSol - \extendedCWsol),
    \end{aligned}
  \end{equation}
  where the second equality comes from the fact that
  $\cwForce_{\activeSymbol}^T\cwSol_{\activeSymbol} = \cwForce^T\extendedCWsol$
  due to the definition of $\extendedCWsol$ in Eq.~\eqref{eq:extended_cwsol}.
  By the h\"{o}lder's inequality, we have
  \begin{equation}\label{eq:holder}
    \begin{aligned}
      \absNorm{ \compErr(\param) } &\leq \ltwoNorm{\cwForce}\ltwoNorm{\cwSol -
        \extendedCWsol} \\
        &\leq \frac{\ltwoNorm{\cwForce}}{c_1}\KinducedNorm{\cwSol -
        \extendedCWsol},
    \end{aligned}
  \end{equation}
  where the second inequality above comes from the equivalence relation of the
  norms. 
  Then, the desired error bound follows by Theorem~\ref{thm:sol-errorbound}.
\end{proof}
\begin{thm}\label{thm:compliancesensitivity-errorbound}
  {\bf A posteriori residual-based error bound for compliance sensitivity} 
  The minimum singular value of $\cwStiffness$ is denoted as
  $\sigma_{\cwStiffness,\text{min}}>0$ and let $\stiffnessSensitivityNorm =
  \ltwoNorm{\frac{d\cwStiffness}{d\param}}$ and $\normreducedCWsol =
  \ltwoNorm{\reducedCWsol}$.  Then for any given $\param \in \RR{\numInsta}$,
  the following a posteriori error bound holds:
  \begin{equation}\label{eq:solerrbound}
    \ltwoNorm{\compSensErr(\param)} \leq
    \frac{\stiffnessSensitivityNorm}{\sigma_{\cwStiffness,\text{min}}^2}
    \ltwoNorm{\cwRes}^2 +
    2\frac{\stiffnessSensitivityNorm\normreducedCWsol}{\sigma_{\cwStiffness,\text{min}}}
    \ltwoNorm{\cwRes}
  \end{equation}
\end{thm}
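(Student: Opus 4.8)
The plan is to rewrite $\compSensErr(\param)$ as the linear-algebraic difference $\extendedCWsol^T\frac{d\cwStiffness}{d\param}\extendedCWsol-\cwSol^T\frac{d\cwStiffness}{d\param}\cwSol$, expand it around the component-wise coefficient error $\cwsolerr=\cwSol-\extendedCWsol$, and then reuse the residual/minimum-singular-value estimate already established in Theorem~\ref{thm:sol-errorbound}.

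First I would assemble the two sensitivities in linear-algebraic form. By Eq.~\eqref{eq:comp-sens-final}, the FOM compliance sensitivity is $\frac{d\dcompliance}{d\param}(\cwSol)=-\cwSol^T\frac{d\cwStiffness}{d\param}\cwSol-\sum_{i=1}^{\numInsta}\dfbubk{i}(\paramSymbolk{i})^T\frac{\partial\femStiffnessk{i}}{\partial\paramSymbolk{i}}\dfbubk{i}(\paramSymbolk{i})\canonicalk{i}^T$, and by Eq.~\eqref{eq:rcompliance_sens} (summed over $i$ to form the full gradient rather than written component by component) the CWROM compliance sensitivity is $\frac{d\dacompliance}{d\param}(\reducedcwSol)=-\reducedcwSol^T\frac{d\reducedcwStiffness}{d\param}\reducedcwSol-\sum_{i=1}^{\numInsta}\dfbubk{i}(\paramSymbolk{i})^T\frac{\partial\femStiffnessk{i}}{\partial\paramSymbolk{i}}\dfbubk{i}(\paramSymbolk{i})\canonicalk{i}^T$. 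The forcing-bubble sums coincide because the bubble functions $\dfbubk{i}$ are computed exactly, not reduced, in our formulation — this is precisely the cancellation anticipated in the discussion before Theorem~\ref{thm:sol-errorbound}. Since $\reducedcwStiffness=\cwStiffness_{\activeSymbol\activeSymbol}$ and the extended vector of Eq.~\eqref{eq:extended_cwsol} places $\reducedcwSol$ in the active entries and zeros elsewhere, $\reducedcwSol^T\frac{d\reducedcwStiffness}{d\param}\reducedcwSol=\extendedCWsol^T\frac{d\cwStiffness}{d\param}\extendedCWsol$, and therefore
\[
  \compSensErr(\param)=\extendedCWsol^T\frac{d\cwStiffness}{d\param}\extendedCWsol-\cwSol^T\frac{d\cwStiffness}{d\param}\cwSol .
\]

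Next, because $\cwStiffness$ is symmetric positive-definite — hence $\frac{d\cwStiffness}{d\param}$ is symmetric in each parameter direction — substituting $\cwSol=\extendedCWsol+\cwsolerr$ and expanding the quadratic forms gives
\[
  \compSensErr(\param)=-2\,\extendedCWsol^T\frac{d\cwStiffness}{d\param}\cwsolerr-\cwsolerr^T\frac{d\cwStiffness}{d\param}\cwsolerr .
\]
Applying the triangle inequality together with the submultiplicative bounds $\ltwoNorm{\extendedCWsol^T\frac{d\cwStiffness}{d\param}\cwsolerr}\le\stiffnessSensitivityNorm\ltwoNorm{\extendedCWsol}\ltwoNorm{\cwsolerr}$ and $\ltwoNorm{\cwsolerr^T\frac{d\cwStiffness}{d\param}\cwsolerr}\le\stiffnessSensitivityNorm\ltwoNorm{\cwsolerr}^2$ (which follow from the definition $\stiffnessSensitivityNorm=\ltwoNorm{\frac{d\cwStiffness}{d\param}}$), and using $\ltwoNorm{\extendedCWsol}=\ltwoNorm{\reducedcwSol}=\normreducedCWsol$ (zero-padding does not change the Euclidean norm), we obtain
\[
  \ltwoNorm{\compSensErr(\param)}\le 2\,\stiffnessSensitivityNorm\,\normreducedCWsol\,\ltwoNorm{\cwsolerr}+\stiffnessSensitivityNorm\,\ltwoNorm{\cwsolerr}^2 .
\]

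Finally I would control $\ltwoNorm{\cwsolerr}$ exactly as in the proof of Theorem~\ref{thm:sol-errorbound}: from $\cwRes=\cwStiffness\cwsolerr$ (Eq.~\eqref{eq:extendedcwresidual}), H\"older's inequality, and the positive-definiteness of $\cwStiffness$, one has $\ltwoNorm{\cwsolerr}\le\ltwoNorm{\cwStiffness^{-1}}\ltwoNorm{\cwRes}=\ltwoNorm{\cwRes}/\sigma_{\cwStiffness,\text{min}}$. Substituting this into the preceding display produces the claimed bound. I expect the main obstacle to be the first step — confirming that the bubble-function contributions really cancel and that the reduced-order sensitivity is the $\activeSymbol\activeSymbol$-block restriction $\extendedCWsol^T\frac{d\cwStiffness}{d\param}\extendedCWsol$ of the full one; once $\compSensErr$ is in the quadratic-difference form, the remaining steps are the same elementary norm manipulations used in Theorems~\ref{thm:sol-errorbound} and~\ref{thm:compliance-errorbound}.
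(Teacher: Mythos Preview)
Your proposal is correct and follows essentially the same route as the paper: express $\compSensErr(\param)$ as the quadratic difference $\extendedCWsol^T\frac{d\cwStiffness}{d\param}\extendedCWsol-\cwSol^T\frac{d\cwStiffness}{d\param}\cwSol$, expand in $\cwsolerr=\cwSol-\extendedCWsol$ via the identity $-\cwsolerr^T\frac{d\cwStiffness}{d\param}\cwsolerr-2\extendedCWsol^T\frac{d\cwStiffness}{d\param}\cwsolerr$, and then apply H\"older together with the residual estimate $\ltwoNorm{\cwsolerr}\le\ltwoNorm{\cwRes}/\sigma_{\cwStiffness,\text{min}}$ from Theorem~\ref{thm:sol-errorbound}. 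Your write-up is in fact more explicit than the paper's on the first step (the bubble-function cancellation and the identification of the reduced sensitivity with the $\activeSymbol\activeSymbol$-block), which the paper states without elaboration.
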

\begin{proof}
  By Eq.~\eqref{eq:comp-sens-final}, the compliance sensitivity error can be
  written as
  \begin{equation}\label{eq:discrete_compliance-sensitivity-error}
    \begin{aligned}
      \compSensErr(\param) &= \cwSol_{\activeSymbol}^T
        \frac{d\reducedcwStiffness}{d\param} \cwSol_{\activeSymbol} -
        \cwSol^T \frac{d\cwStiffness}{d\param} \cwSol \\
      &= \extendedCWsol^T \frac{d\cwStiffness}{d\param} \extendedCWsol -
      \cwSol^T \frac{d\cwStiffness}{d\param} \cwSol,
    \end{aligned}
  \end{equation}
  where the second equality comes from the fact that $\cwSol_{\activeSymbol}^T
  \frac{d\reducedcwStiffness}{d\param} \cwSol_{\activeSymbol} =
  \extendedCWsol^T \frac{d\cwStiffness}{d\param} \extendedCWsol$ 
  due to the definition of $\extendedCWsol$ in Eq.~\eqref{eq:extended_cwsol}.
  Note that the following identity holds:
  \begin{equation}\label{eq:identity-sensitivity-error}
    \extendedCWsol^T \frac{d\cwStiffness}{d\param} \extendedCWsol -        
    \cwSol^T \frac{d\cwStiffness}{d\param} \cwSol = - (\cwSol -
    \extendedCWsol)^T\frac{d\cwStiffness}{d\param} (\cwSol - \extendedCWsol) -
    2\extendedCWsol^T\frac{d\cwStiffness}{d\param} (\cwSol - \extendedCWsol).
  \end{equation}
  Applying the H\"older's inequality, the equivalence norm
  \eqref{eq:equivalenceNorm} and \eqref{eq:holder} to
  Eq.~\eqref{eq:identity-sensitivity-error}, the desired error bound follows. 
\end{proof}
  
\section{Computational Costs}\label{sec:computational-costs}
A key advantage of the CWROM methodology is the reusability of components; given
a set of trained components, we may now solve any system composed of connected
instantiations of those reference components. Furthermore, because the forward model
only needs to be solved over the domain of two instantiated components for the training
procedure, the training of the CWROM is much more economical than the training of a
conventional ROM that takes snapshots of the entire system state, since the problem
solved in the offline phase is of a much smaller dimension than that solved in the
online phase. In this section we quantify the training and solution costs in order to
predict the speedup that can be expected from use of the CWROM.

Here we take \(\mathcal{N}^\gamma = \spaceDimension{p}{\gamma}\) to be the same for all
ports, and \(\tilde{\mathcal{N}}^\gamma\) be its reduced counterpart. We let \(N_{p}\) be the
number of ports per component for all components in a system. Finally, the full dimension
of the finite element function space over a component's mesh is denoted by \(N_{h} = \left|X_h(\Omega_i)\right|\),
equal for each \(\Omega_i\).

\subsection{Offline costs}
As discussed above, the training cost for the CWROM scales with the dimension of the
discretized components, not with the size of an assembled system from those components.
The contributing costs in the offline phase are:

\begin{itemize}
    \item The solution process of Sturm-Liouville eigenproblem for each port,
        Eq. \eqref{eq:sturm-liouville}, to obtain the port basis used in training.
    \item The solution process of Step 4 of Algorithm~\ref{alg:pairwise_training};
      this is the dominant cost involved in training.
    \item The POD process \eqref{eq:pod}
    \item Lifting of the computed basis to form the reduced skeleton space
\end{itemize}

The eigensolve cost is dominated by the actual eigendecomposition, requiring
\(O\left((\mathcal{N}^\gamma)^3\right)\) floating point operations (FLOPs).
Assuming POD is performed by computing the SVD of a snapshot matrix, the work
required in this step is \(O\left(\mathcal{N}^\gamma N_{samples}^2\right)\).
Generally, both of these costs are dominated by the cost of the solution process for
the two-component system. If this solution is computed using the conventional FEM
technique, the largest floating point cost is the cost of solving the assembled linear
system. While the complexity of this solve is difficult to predict, it will typically
be on the order of \(N_h^2\); since \(N_h\) is typically at least an order of magnitude
larger than \(\mathcal{N}^\gamma\) and the solve must be repeated \(N_{samples}\) times,
this cost will significantly outweigh the cost of the eigensolve and the POD procedure.
The lifting of the computed basis is also significantly more expensive than these two
steps, since it requires \(\tilde{\mathcal{N}}^\gamma\) FEM solves over the domain of
a single component. This cost is of the same order as the cost of training; however,
in general \(\tilde{\mathcal{N}}^\gamma\) is significantly less than \(N_{samples}\)
so that the training procedure is the dominant cost.

We actually choose to solve the two-component system using the CWFOM. In this
case, the cost of the solve scales as discussed in the next sub-section on the cost of
the online phase; however, there is a preliminary computational cost to build the component-wise
model. This cost consists of an eigensolve to compute a basis for the ports of each
component, and the lifting of that basis to form a skeleton space for each component.
The latter cost is again the most significant, since it requires \(\mathcal{N}^\gamma\)
solutions of the governing equation over a component at a cost roughly proportional to
\(N_h^2\). Once this basis is built, however, the solution on the two component system
is quite efficient, so that the cost of building the CWFOM may be amortized over the
collection of many snapshots.

\subsection{Online cost}
Once the component-wise training is finished, the online phase consists of
\begin{itemize}
  \item Assembly (Algorithm~\ref{alg:sc_assemble}) and the linear system solve. Assembly requires
    computation of the bubble functions $b_i^{f;h}(\paramk{i})$ and
    $b^h_{i,j,k}(\paramk{i})$, and computation of the local Schur complement matrices
    $\cwStiffness^i(\bm{\mu}_i)$; these costs outweigh the cost of assembling
    the condensed forcing vector $\cwForce$, which is omitted here.
  \item Solution of the Schur complement system (Eq. \eqref{eq:cw_linear_system})
  \item Reconstructing the solution field and computing output quantities
\end{itemize}

For each component, the computation of the bubble functions requires
$n_{\compIndexMapk{i}}^\gamma \times \tilde{\mathcal{N}}^\gamma + 1$ solutions to a FEM problem
on the component's discretization. In practice for parameter dependent problems,
an affine decomposition of the bilinear form will be stored so that the computational
cost of this step is primarily due to the linear solve phase of the computation,
not assembly. We again take the cost of this solve to be $O(N_h^2)$, with the caveat
that time complexity of sparse linear solvers can be difficult to predict. Therefore
the cost of computing bubble functions is
$O\left(n_{\compIndexMapk{i}}^\gamma \times \tilde{\mathcal{N}}^\gamma \times N_h^2
+ N_h^2\right)$. This cost may be significantly reduced in implementations by storing
an affine decomposition of the factorization of the bilinear form matrix as well as
the matrix itself, so that only the application of the factorization is required in the
online phase.

The computation of the local Schur complement matrix consists of a series of
applications of the bilinear form, $a_{\compIndexMapk{i}}\left(\phi_{i,j,k}^h(\paramk{i}),
\phi_{i,j',k'}^h(\paramk{i});\paramk{i}\right)$. Assuming that an affine decomposition
of the matrix representation of the bilinear form is stored and that it is sparse,
each application of the bilinear form requires only $O(N_h)$ operations. Thus the
computation of the local Schur complement requires
$O\left(n_{\compIndexMapk{i}}^\gamma \times \tilde{\mathcal{N}}^\gamma \times N_h\right)$
operations, dominated by the cost of computing the bubble functions required to form the
skeleton space.

Finally, we solve the assembled linear system of size $n_{SC} \times n_{SC}$ and obtain
the solution field and output quantities. The system
has block sparse structure, but in the case where the number of components is small it is
actually quite dense so that algorithms for sparse systems are not beneficial. In this
case, then, we take the complexity of the solution to be $O(n_{SC}^3)$. In large systems,
such as the lattice systems where we will apply the CWROM, sparse solvers may be used and
decrease complexity to $O(n_{SC}^2)$. Reconstructing the solution consists of scaling of
the patched interface basis functions $\ifunc{p,k}(\bm{\mu})$ by the coefficients
$\cwSol_{p,k}(\bm{\mu})$ and summation to form the solution, at a cost of
$O(\tilde{\mathcal{N}}^\gamma N_h)$ per component. In our use case, the output quantities
of interest are the compliance and its sensitivity. The cost of computing the compliance
is also $O(N_h)$ since it is a simple dot product; the cost of computing its sensitivity
is, as well, since it consists of a dot product and a sparse matrix-vector product, but
the constant in the sensitivity case is larger.

We note that all of the steps above may be trivially parallelized by distributing operations
on a per-component basis except for the linear system solve. Therefore, we expect the
cost of the linear system solve, $O(n_{SC}^2)$, to be the dominant factor in a well-optimized
parallel implementation.

\subsubsection{Simplification for the linear case}
In the case that the simplification in Sec. \ref{sec:linear_cwrom} is valid, i.e.,
the bilinear form is linear in a function of $\bm{\mu}$, the local Schur complement
contribution $\cwStiffness^i(\bm{\mu}_i)$ may be computed for a reference value of
$\bm{\mu}$ and simply scaled during assembly in the online phase. This reduces the cost
of computing the local Schur complement during assembly, but more importantly, when
using the linear simplification the patched interface basis functions
$\ifunc{p,k}$ are parameter independent, completely eliminating the cost of computing
the bubble functions related to the interface functions. A single bubble function solve
for $\bub{i}{f;h}(\bm{\mu}_i)$ is still required if the body forcing on the component is
non-zero, but the simplification still mostly removes the $O(N_h^2)$ cost of computing
bubble functions. In this work, we allow forces to be applied only over ports so that
no bubble functions at all must be computed.

The linear simplification also reduces the required training cost; since the bilinear
form scales linearly with a function of the parameter, we can train with a single
value of the parameter which allows to compute a decomposition of the bilinear form
matrix only once, then use it to solve the linear system for every snapshot, greatly
reducing the $O(N_h^2)$ cost of the $N_{samples}$ solves for port snapshots.

\section{Numerical Results}\label{sec:results}
We consider two lattice structure optimization problems; one small example,
for which performance of the component-wise model may be compared directly to the
solution of the conforming finite element model (the full-order model or FOM, which is
different from the CWFOM), and one optimization of a larger system for which the
solution of the FOM is infeasible. We use a fine discretization of components in order to
illustrate the capability of the component-wise methodology to capture a high level of detail in
simulations while preserving runtime that is asymptotically independent of the
underlying component discretization \cite{ballani2018component}.

\subsection{A cantilever beam with lattice structure}
The components used in this numerical example are pictured, with their
discretization, in Figure \ref{fig:two_components}. The second component is also
used in its vertical orientation; however, in order to make use the
simplification described in Section \ref{sec:linear_cwrom},  the
vertical orientation is in fact treated as a third component. This does not
increase the cost of the online model. The discretization of components consists
of first order quadrilateral elements with bilinear shape functions.  The
discretization of the joint component contains $3,675$ elements, while that of
the strut component contains $3,800$. Linear elasticity is approximated in two
dimensions using the plane stress approximation. The material used has a Young's
modulus of 69 GPa and a Poisson's ratio of 0.3, similar to the properties of
aluminum. All ports have a length of 1 cm, while the length of the strut component
is 5 cm.

\begin{figure}[h]
  \centering
  \subfloat[The joint component]
  {\includegraphics[width=0.25\textwidth]{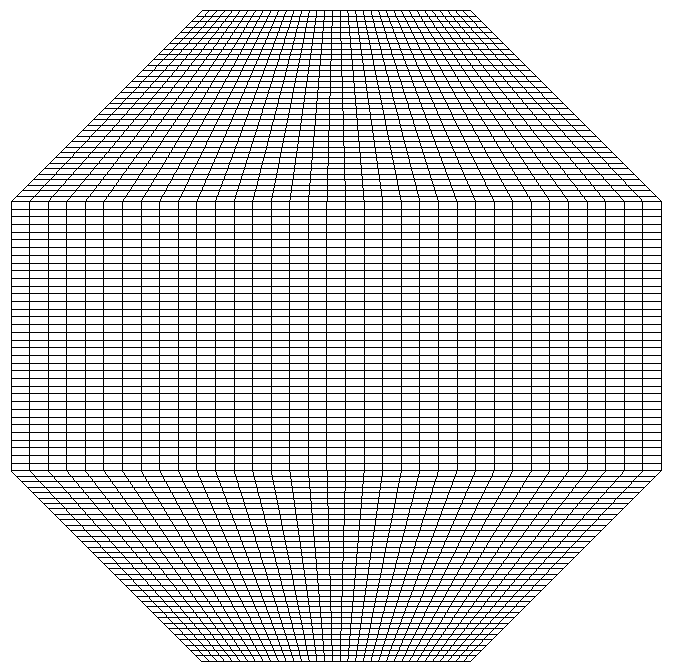}}\hspace{24pt}
  \subfloat[The strut component]
  {\includegraphics[width=0.5\textwidth]{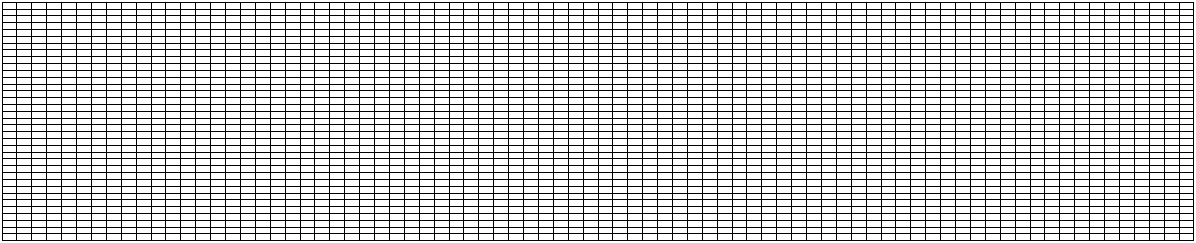}}
  \caption{Components used in the cantilever beam example}
  \label{fig:two_components}
\end{figure}

The lattice for this optimization example
contains a total of 290 components. The corresponding finite element model contains
\(1,844,640\) degrees of freedom.
We place the structure under tension by setting homogeneous Dirichlet boundary
conditions for displacement on the lower- and upper-most ports and the middle two ports on the left
hand side of the system and applying a uniform pressure force to each of the middle two
ports on the right-hand side. The upper port has a pressure force $100 \times 10^6$ N/m
in each of the positive X and Y directions; the lower port has a pressure force of $100
\times 10^6$ N/m applied in the positive X direction, and the same pressure force applied
in the negative Y direction. The length of each port is 1 cm, so the effective force on
each port is $\sqrt{2} \times 10^6$ N/m, directed at a $45^\circ$ angle upward for the
upper port and downward for the lower port. This problem setup is pictured in
\ref{fig:small_setup}.

The timings in this example are obtained on a desktop computer with an Intel
i7-4770k CPU on a single core operating at 3.5 GHz, and 32 GB of RAM, using an original
software implementation our methodology. The optimization method in
all examples is the method of moving asymptotes \cite{svanberg1987mma}, implemented in
the NLopt optimization library \cite{johnsonnlopt}.

\begin{figure}[h]
  \centering
  \begin{tikzpicture}
    \node (syspic) {\includegraphics[width=0.4\textwidth]{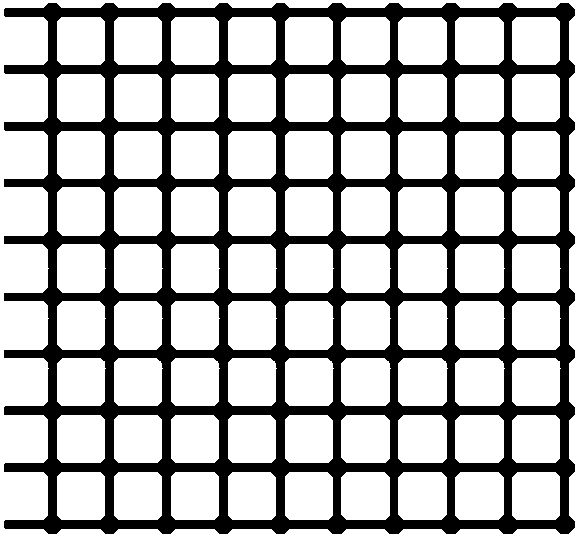}};

    \filldraw[fill=blue] (-3.57, 2.55) -- ++(0, 0.75) -- ++(0.35, 0) -- ++(0, -0.75) -- cycle;
    \filldraw[fill=blue] (-3.57, -2.55) -- ++(0, -0.75) -- ++(0.35, 0) -- ++(0, 0.75) -- cycle;
    \filldraw[fill=blue] (-3.57, -0.6) -- ++(0, 1.2) -- ++(0.35, 0) -- ++(0, -1.2) -- cycle;
    \node (homogeneous) at (-6, 0) {\large\(\bm{u} = \begin{pmatrix} u_x \\ u_y \end{pmatrix} = \bm{0}\)};
    \draw[-{Triangle[width=8pt, length=8pt]}, red, very thick] (-5.8, -1) -- (-3.8, -2.9);
    \draw[-{Triangle[width=8pt, length=8pt]}, red, very thick] (-5.8, 1) -- (-3.8, 2.9);
    \draw[-{Triangle[width=8pt, length=8pt]}, red, very thick] (homogeneous) -- (-3.8, 0.0);

    \draw[-{Triangle[width=16pt, length=10pt]}, line width=8pt, draw=black!60!green] (3.4, 0.31) -- (4.2, 0.91);
    \draw[-{Triangle[width=16pt, length=10pt]}, line width=8pt, draw=black!60!green] (3.4, -0.31) -- (4.2, -0.91);
    \node (force1) at (6, 0) {\(\sqrt{2} \times 10^6\) N};
    \draw[-{Triangle[width=8pt, length=8pt]}, red, very thick] (6.2, 0.4) to[out=110,in=0] (4.5, 0.9);
    \draw[-{Triangle[width=8pt, length=8pt]}, red, very thick] (6.2, -0.4) to[out=250,in=0] (4.5, -0.9);
  \end{tikzpicture}
  \caption{Problem setup for the cantilever beam numerical example}
  \label{fig:small_setup}
\end{figure}

\subsubsection{Performance of the component-wise discretization}
\label{sec:cwrom_perf}
In order to illustrate the superior performance of the component-wise modeling methodology,
we compare the solution time for the CWFOM and the CWROM to that required to solve the
underlying FEM problem on the same system discretization (denoted the FOM, by contrast with
the CWFOM, which is also a full-order model). This study is performed for the
problem shown in Figure \ref{fig:small_setup}. The linear solver used in all cases is a
sparse direct solver using the Cholesky factorization; future work will investigate the benefits
of using an alternative solver that better exploits the block sparse structure of the Schur
complement $\cwStiffness(\param)$.

In Figure \ref{fig:perf_comparison} we show comparison of the solution time and
solution error of CWROM's with varying port basis size relative to the FOM. The CWFOM is
also included for comparison;
the port basis dimension in the full order model is \(\spaceDimension{p}{\gamma} = 72\) for
all ports. Note that the relative solution and time and relative error are plotted on separate
ordinates; both are shown on a logarithmic scale. The relative solution time is 
given relative to the time for a FOM solve: $t_{rel} = t / t_{FOM}$ while the relative
error is in \(L^2\) norm; that is,
\begin{equation}\label{eq:L2_relative_error}
  \epsilon_{rel} = \frac{\left\|u_{CW} - u_{FOM}\right\|_{L^2}}{\left\|u_{FOM}\right\|_{L^2}}
\end{equation}
with the \(L^2\) norm given as usual by
\[
  \left\| u \right\|_{L^2} = \left(\int_\Omega u^T u \ d\Omega\right)^{1/2},
\]
where $u_{CW}$ is the solution as found using the component-wise model and $u_{FOM}$ is the
solution from the FOM.

\begin{figure}
  \centering
  \begin{tikzpicture}
    \pgfplotsset{set layers}
    \begin{axis}[
      scale only axis,
      xlabel={Port basis dimension},
      axis y line*=left,
      ylabel={$t_{rel}$ \ref{pgfplots:timeplot}},
      width=0.5\textwidth,
      ymode=log,
      log basis y = 10,
      xmin=2, xmax=22,
    ]
    \addplot
      coordinates {(4, 0.00084) (6, 0.0018) (8, .0023) (12, .004) (16, 0.0062) (20, .0097) }; 
    \label{pgfplots:timeplot}
    \end{axis}

    \begin{axis}[
      scale only axis,
      axis y line*=right,
      axis x line = none,
      ylabel={$\epsilon_{rel}$ \ref{pgfplots:accplot}},
      ymode=log,
      ymin=1e-9,
      ymax=1e-2,
      width=0.5\textwidth,
      xmin=2, xmax=22,
      ylabel near ticks
    ]
    \addplot+[sharp plot,red,mark=x,mark size=3]
      coordinates {(4, 5.7e-3) (6, 4.7e-3) (8, 2.8e-4) (12, 2.3e-5) (16, 8.7e-8) (20, 8e-9)}; 
    \label{pgfplots:accplot}
    \end{axis}
  \end{tikzpicture}
  \caption{Comparison of various CWROM resolutions' run time and error with respect to the FOM}
  \label{fig:perf_comparison}
\end{figure}
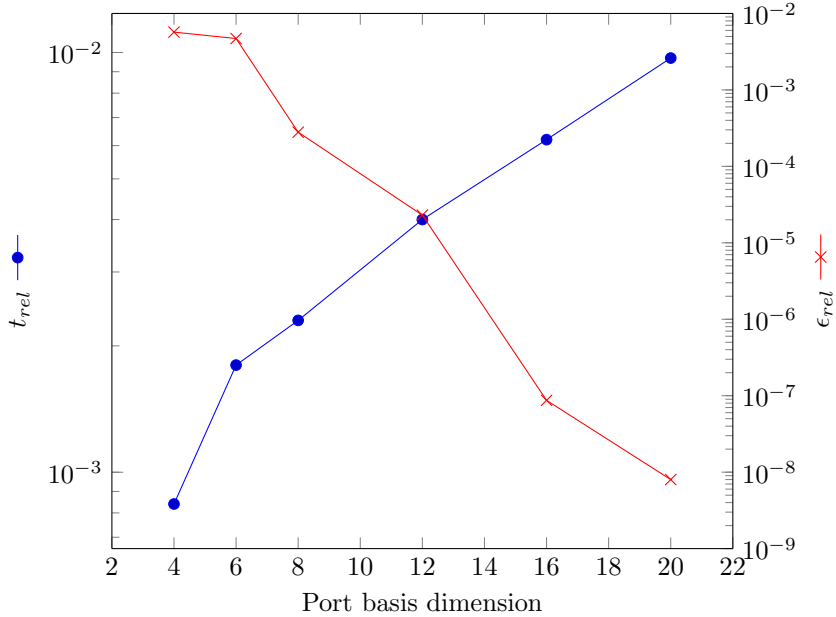

We note that the performance comparison given here should be taken as a general
statement of orders of magnitude that may be expected, as neither the finite
element primitives or the component-wise model implementation are well
optimized. The full data for Fig. \ref{fig:perf_comparison} may be found in
Table \ref{tab:perf_comparison}. The data points for port bases of sizes 36 and 72 are
omitted from the figure in order not to distort the abscissa; the trend in runtime
remains the same, while the relative error in the component-wise model does not
decrease appreciably when adding more than 20 basis vectors.

\begin{table}[h]
  \centering
  \begin{tabular}{| c | c  c  c |}
    \hline
    Port dimension & $t$ (s) & $t_{rel}$ & ${\epsilon_{rel}}$ \\
    \hline \hline
    4 & 0.081 & 8.4e-4 & 5.7e-3 \\
    6 & 0.17 & 1.8e-3 & 4.7e-3 \\
    8 & 0.22 & 2.3e-3 & 2.8e-4 \\
    12 & 0.38 & 4.0e-3 & 2.3e-5 \\
    16 & 0.59 & 6.2e-3 & 8.7e-8 \\
    20 & 0.93 & 9.7e-3 & 8.0e-9 \\
    36 & 3.2 & 3.3e-2 & 7.3e-9 \\
    72 & 16.32 & 1.7e-1 & 7.3e-9 \\
    \hline
  \end{tabular}
  \caption{Collected timing and error data}
  \label{tab:perf_comparison}
\end{table}

For this use case, with the simplification from Section \ref{sec:linear_cwrom}
in effect,
even the CWFOM achieves a 5x speedup over the FOM, and with the reduced basis approximation, a speedup of over 1000x
is realized while still achieving a relative error of less than 1\%. By a basis size
of 20, the relative error is reduced to $\approx 10^{-8}$, while still achieving
a 100x speedup. The small relative error present even in the CWFOM is introduced due
to finite precision arithmetic.

\subsubsection{Optimization results}
We solve the optimization formulation in Eq. \eqref{eq:opt_formulation} using
the method of moving asymptotes (MMA) \cite{svanberg1987mma}. We choose the
CWROM with 8 basis functions per port as a compromise between the time per
optimization iteration and error in the computed compliance and compliance
sensitivity. The maximum mass fraction for this optimization is taken to be
\(\frac{v_u}{v_t} = 60\%\), with \(v_t = \sum_i v_i\) the total volume of all
components. To check the sensitivity of the optimization to initial conditions,
the initial condition for the optimizer is sampled from a truncated
normal distribution with mean \(0.6\) and standard deviation \(0.05\); this
distribution is truncated so that values lie in \([0, 1]\) as required. This
initial value is used in order to approximately satisfy the volume constraint.
Several optimizer runs were used from different randomly sampled initial values,
and an initialization with the density for each component set to $\paramk{i} = 0.6$ to exactly
satisfy the constraint was also tested.

The final result from all initialization choices is similar, except for a
small fraction of the random initializations in which the optimization results in
a dramatic increase of the objective function then no subsequent improvement.
However, the MMA iteration does not succeed in finding a local minimum, but stalls,
making only very small adjustments to the state while searching for a descent direction.
Future work will investigate alternative optimization algorithms to overcome this
shortcoming. To handle this difficulty, we terminate the optimization based on a running
mean of the change in the parameter values. Specifically, the optimization is ended when
the mean value over the past 10 iterations of the scaled norm
\begin{equation}
  \label{eq:stopping_criterion}
  \frac{\left\|\param^n - \param^{n-1}\right\|_2}{\sqrt{n_I}}
\end{equation}
is less than $10^{-6}$, where $\param^n$ denotes the value of the optimization
parameter at the $n$th iteration, and $n_I$ is the number of instantiated components
(dimension of the parameter vector). In most cases, this criterion was met at between
50 and 150 MMA iterations.

\begin{figure}[h]
  \centering
    \subfloat[The density field after optimization]
    {\includegraphics[width=0.53\textwidth]{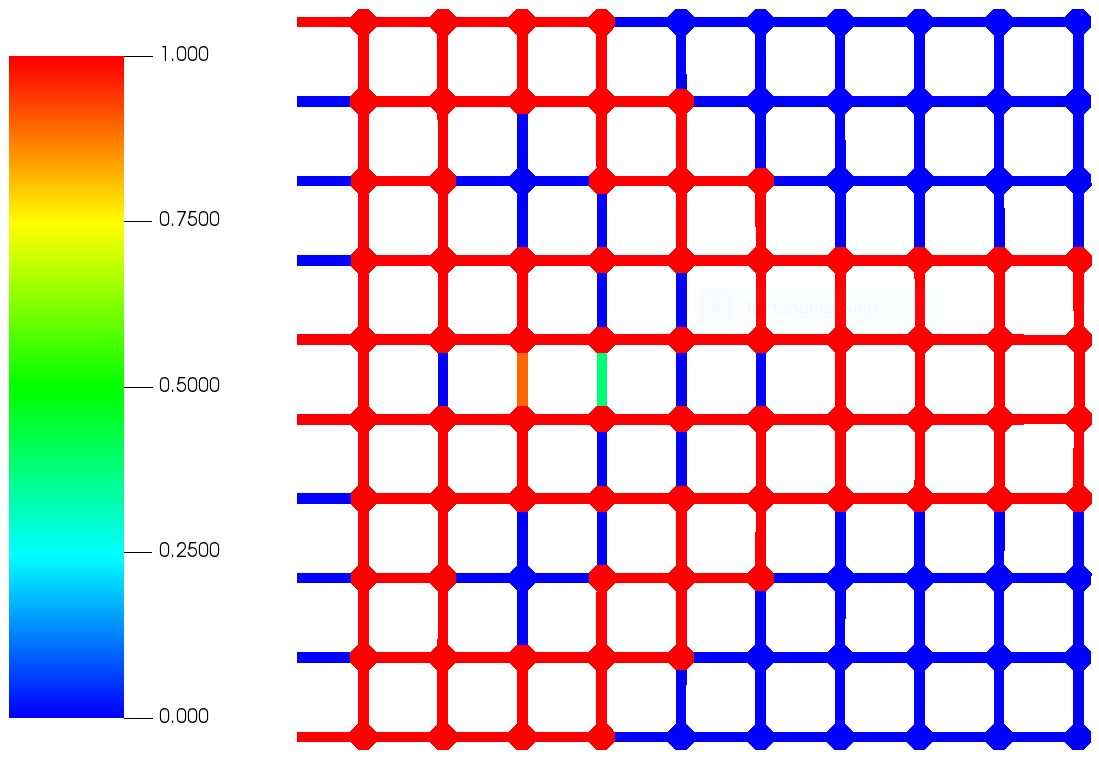}\hspace{40pt}}
    \subfloat[The system design obtained from post-processing]
    {\includegraphics[width=0.38\textwidth]{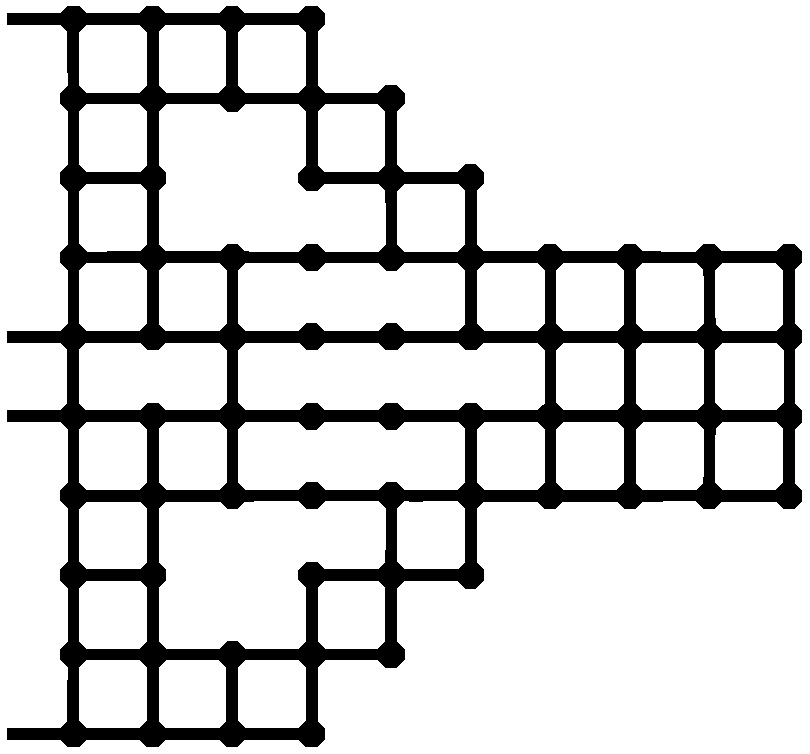}}
    \caption{Optimization result from a uniform initial condition}
    \label{fig:small_optimization_result}
\end{figure}

The SIMP penalization does not result in a purely black-and-white design;
rather, there are a few components with intermediate values of \(\paramk{i}\).
Therefore, post-processing is required to create a design that consists only of
solid material and void regions. We choose to remove those components with
\(\paramk{i} < 0.7\), and set \(\paramk{i} = 1\) for the remaining components.
The numerical results reported here are for a uniform initial condition for the
optimizer. The initial value of compliance was \(9,858\) N\(\cdot\)m, the
optimized value was \(2,185\) N\(\cdot\)m, and following post-processing the
system by removing partially-void components, the obtained compliance value was
still \(2,185\) N\(\cdot\)m to the fourth significant digit, and the result
satisfies the \(60\%\) volume fraction constraint.  The configuration of the
optimized system is shown in Fig. \ref{fig:small_optimization_result}, with both
the density field returned by the optimizer and the resulting structure.  Random
initialization does affect the final optimization result. For most choices of
the random initialization (in which the optimization converges), there are
several asymmetries in the parameter field; when beginning with a uniform
initialization, the optimized structure is symmetrical. There is more than a
single asymmetrical local optimum, as well. A few realizations of the random
initial condition result in a final design that slightly improves on the
compliance of the symmetrical design; however, the improvement is not retained
after post-processing, because of components with intermediate densities that
are removed. Instead, the final result is worse; therefore, we recommend using a
uniform initial condition. Of course, this observation is mainly for MMA and
for other optimization solvers, we may see different results.

\subsection{A cantilever beam with a larger number of lattice components}
The larger optimization example makes use of the same component discretizations and
material properties as previously, but a lattice containing 2,950 components. This
lattice is pictured in Fig. \ref{fig:large_lattice}. The finite element mesh of the full
system contains 9,362,520 nodes, for a total of 18,725,040 degrees of freedom in the
system without static condensation. In this example, we compare the optimized solutions
obtained from increasing resolution in the reduced order model; as in the comparison of
performance for the forward model, we use port basis sizes of 4, 6, 8, 12, 16, and 20
basis functions. The total number of degrees of freedom in the CWROM then ranges from
16,080 to 80,400.

The system of components in this study form a cantilever beam with a homogeneous Dirichlet
boundary condition on each of the ports on the left hand side of the domain.
Two loads are placed on the system, on the right and bottom
ports of the lowest right component. On the right port, a uniform pressure force of $10^7$ N/m to
the right is added, while on the bottom port we impose a uniform pressure force of
$3 \times 10^7$ N/m downward.

\begin{figure}[h]
  \centering
  \includegraphics[width=0.9\textwidth]{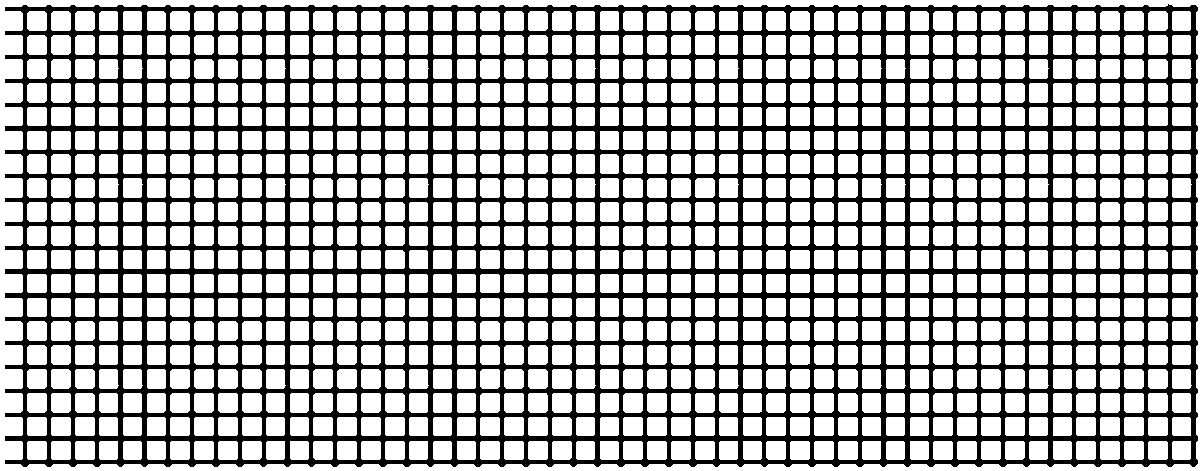}
  \caption{The lattice structure for the cantilever beam problem}
  \label{fig:large_lattice}
\end{figure}

The optimization is solved using the same optimization method and stopping criterion as the previous example;
however, in this case we do not use a random initialization for multiple runs, but instead compare the solutions
produced from a uniform initialization by different CWROM resolutions.
In addition, we increase the stopping tolerance for change in the optimization parameter (Eq. \eqref{eq:stopping_criterion})
to $10^{-4}$. The target mass fraction for this optimization was 25\%. When post-processing, the tolerance for intermediate
densities was increased, by removing only components
with $\paramk{i} < 0.5$ and setting $\paramk{i} = 1$ for the rest; this results in a post-processed
design that slightly exceeds the given upper bound for the mass fraction. Table \ref{tab:opt_comparison} shows
a comparison of the results obtained from different CWROM resolutions; the first column for compliance results is
the value of the optimized compliance computed by the CWROM without post-processing, while the value in the final
column is the value of the compliance computed by the highest-resolution CWROM using the post-processed densities.
We also report the relative error for each discretization, measured against the solution of the CWFOM; the relative
error is again given by Eq.~\eqref{eq:L2_relative_error}. We note that the magnitude of the relative error is
consistent with the results in the previous section, up until the basis size of 20; at very high resolutions of
the CWROM, its solution matches the CWFOM more closely than it matched the full-order FEM model in the previous
section. Some numerical error is still present.  We only report optimization results up to a basis size
of 20 here both due to the runtime of the optimization in our experimental implementation, and
because, as shown below, the solution of the CWROM by a basis size of 20 is essentially
the same as that of the CWFOM.

\begin{table}[h]
  \centering
  \begin{tabular}{| c | c  c  c  c |}
    \hline
    Port dimension & Time (s) & $c_{ROM}$ (N$\cdot$m) & $c_{post}$ (N$\cdot$m) & Relative error\\
    \hline \hline
    4 & 38 & 8980.3 & 9051.5 & $1.04\times 10^{-2}$ \\
    6 & 36 & 8989.5 & 8988.1 & $7.83\times 10^{-3}$ \\
    8 & 101 & 8947.6 & 8882.5 & $2.88\times 10^{-4}$ \\
    12 & 396 & 8951.6 & 8880.3 & $2.43\times 10^{-5}$ \\
    16 & 558 & 9066.4 & 8965.5 & $1.32\times 10^{-7}$ \\
    20 & 924 & 9066.3 & 8965.5 & $3.81\times 10^{-10}$ \\
    36 & -- & -- & -- & $2.20\times 10^{-10}$ \\
    \hline
  \end{tabular}
  \caption{Optimization results for the cantilever beam problem. The time given is
  the total time used for the optimization procedure. $c_{ROM}$ is the optimal compliance
  returned from the optimizer. $c_{post}$ is the compliance of the
  optimized design computed after post-processing, using the CWROM with port basis
  dimension 20. Error reported is relative to a reference solution of the CWFOM.}
  \label{tab:opt_comparison}
\end{table}

A port basis size of 6 was faster than a port basis size of 4 due to faster convergence of the
optimization; this trend did not continue for larger basis sizes. Up to a basis size of 12, the
compliance of the optimized design after post-processing decreased, however, the two higher-dimensional
CWROM's actually resulted in a substantially worse design. This irregularity may indicate that the
lower-dimensional CWROM smooths the objective function, allowing the the optimizer to find a lower
local minimum than for higher basis sizes. There is a trade-off between time to solution and accuracy
evident here; a basis size of 4 is probably not accurate enough for most purposes, as evidenced by the
discrepancy between the compliance value returned by the optimizer and the significantly higher value
resulting after post-processing. For a basis size of 6 or greater, however, the largest difference in
compliance of the optimized designs is only approximately 1\%, while the computational cost of the
optimization increases rapidly.

We note that while the complexity estimates in Section
\ref{sec:computational-costs} indicate that the runtime should increase quadratically with basis
size, we actually observe a more rapid increase in practice, making the acceleration enabled by the
CWROM even more significant.

In Fig. \ref{fig:large_densities}
and Fig. \ref{fig:large_structure} we show the resulting density field and post-processed optimized
structure obtained from the basis size 12 optimization (which resulted in the best design).
The optimization was terminated based on the stopping criterion given in Eq. \ref{eq:stopping_criterion}
after 153 iterations. The value of the compliance after optimization
was 8,952 N$\cdot$m, and post-processing did not remove any components with non-zero densities less than $0.5$,
and set the densities of 74 components with intermediate values of $\paramk{i}$ to 1. This resulted in a
slightly lower compliance value in the post-processed structure, 8,880 N$\cdot$m, at the cost of slightly
exceeding the target mass fraction; the mass fraction of the post-processed structure is 25.3\%.

\begin{figure}[H]
  \centering
  \subfloat[Volume fraction legend]
  {\includegraphics[width=0.09\textwidth]{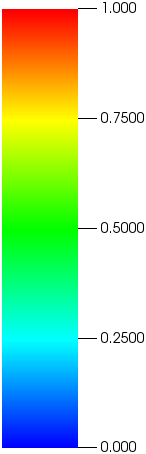}}\hspace{24pt}
  \subfloat[Volume fraction field for the lattice structure after optimization]
  {\includegraphics[width=0.75\textwidth]{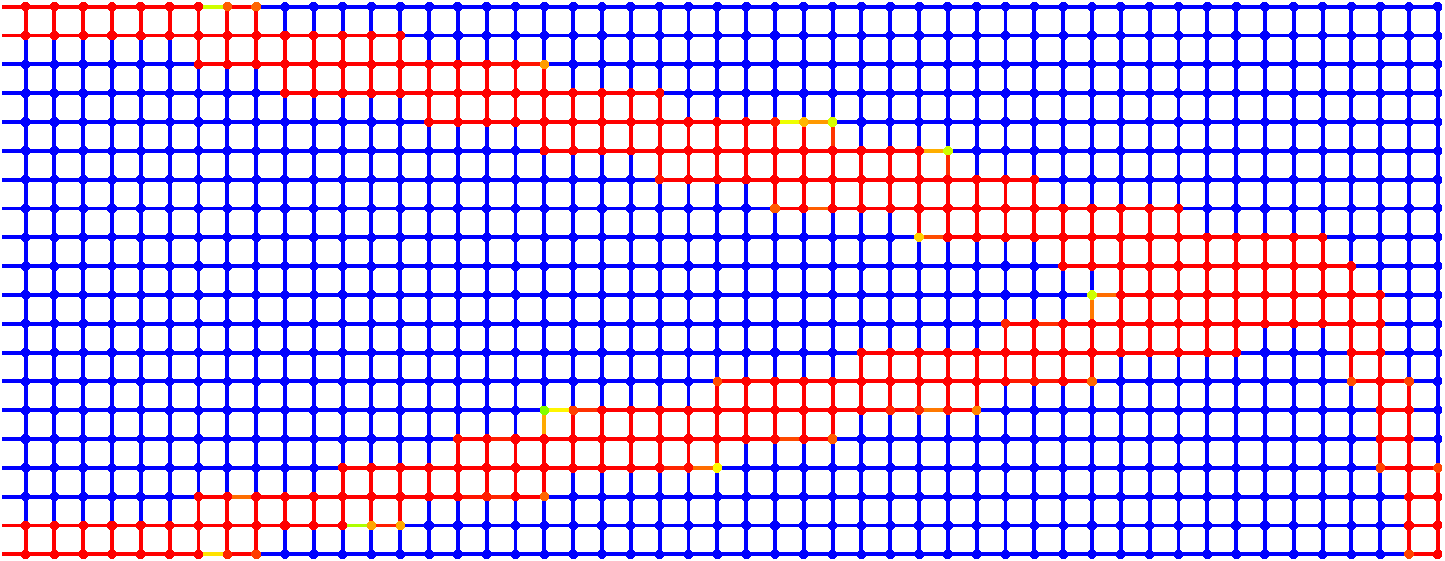}}
  \caption{}
  \label{fig:large_densities}
\end{figure}

\begin{figure}[h]
  \centering
  \includegraphics[width=0.8\textwidth]{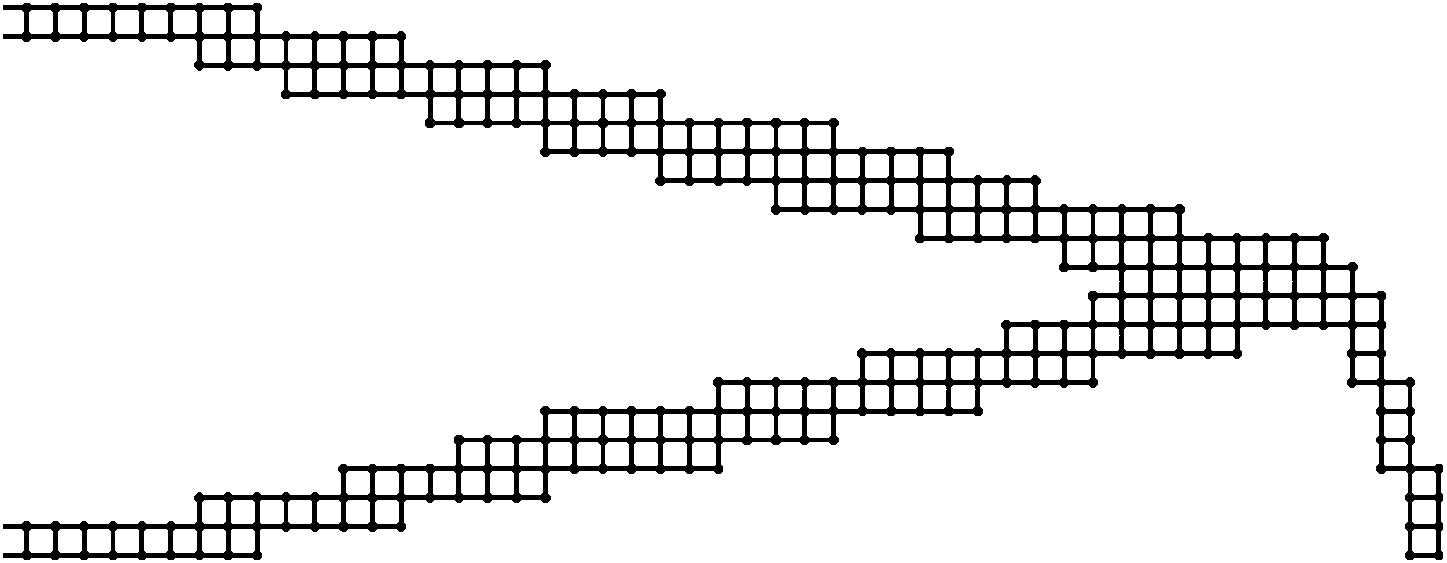}
  \caption{The optimized structure after post-processing}
  \label{fig:large_structure}
\end{figure}

In Fig. \ref{fig:stress_field}, we show the Von Mises stress field over a single component from the
optimized system in \ref{fig:large_structure}. The resolved stress concentrations at the corners that
are apparent in this plot illustrate a key benefit of the component-wise modeling approach for TO -
it does not sacrifice high resolution solutions for the sake of speedup. In contrast with homogenization
methods, which take the periodic structure to be at less than the length scale of a finite element, our
approach allows computation of fields at the same length scale as the lattice structure. Approaches
based on approximating lattice members as beam elements, on the other hand, solve an approximated form
of the governing equation, while the CWROM solves the original problem in a reduced function space that
nevertheless closely approximates the full FEM space as seen from the relative accuracies reported in
Section \ref{sec:cwrom_perf}.

\begin{figure}[h]
  \centering
  \subfloat[Von Mises stress scale, in N/$\text{m}^2$]
  {\includegraphics[width=0.16\textwidth]{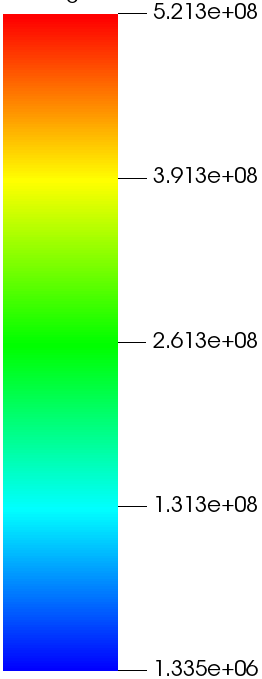}}\hspace{24pt}
  \subfloat[The von Mises stress field on the joint component neighboring the
  upper left strut in the domain]
  {\includegraphics[width=0.45\textwidth]{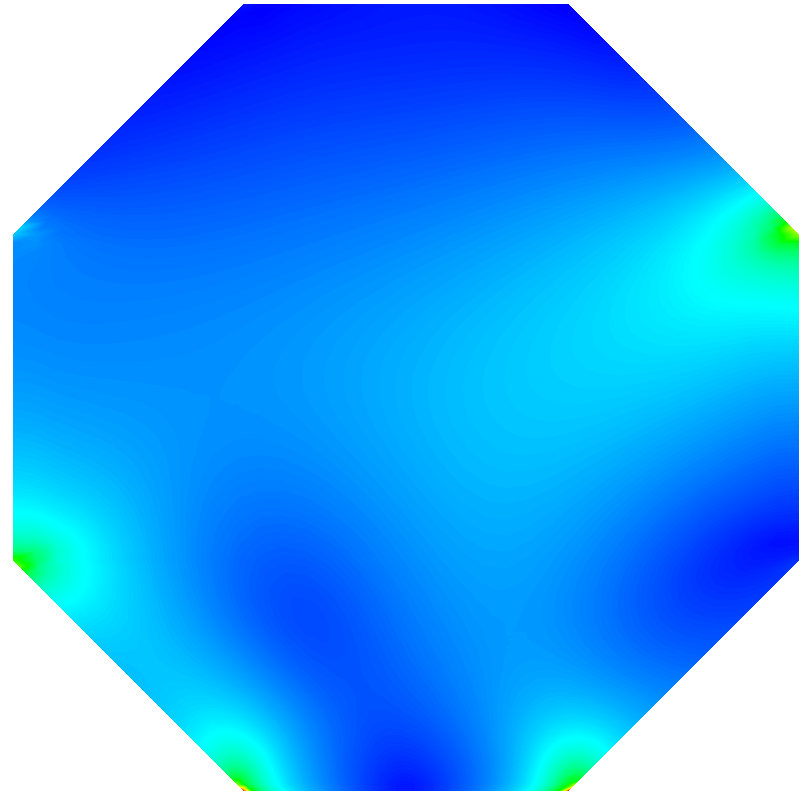}}
  \caption{}
  \label{fig:stress_field}
\end{figure}

\section{Conclusion}\label{sec:conclusion}
We have demonstrated a component-wise topology optimization method that provides
a combination of computational efficiency and accuracy not seen in other methods
for lattice structure optimization.  Using the static condensation formulation
of Huynh et al. \cite{huynh2013static} and the port reduction of Eftang \&
Patera \cite{eftang2013pr}, we obtain a speedup in the solution of the forward
model of over 1000x over a conforming FEM model, with relative error of less
than 1\%.  We also show a key simplification of the component-wise method for
the case where the parameter dependence of the linear elasticity weak form is
linear in a function of parameter, providing further acceleration in our
lattice structure design optimization using a SIMP parameterization. This work
forms a base on which to build more sophisticated component-wise optimization
frameworks; the high-resolution capability of the component-wise reduced order
model is particularly intriguing in the context of stress-based TO. Future work
will develop a stress-based component-wise formulation, and extend the SIMP
parameterization used in this paper to include geometric parameters modifying
the shape of members in the lattice structure.

\section*{Acknowledgments}
This work was performed in part at Lawrence Livermore National Laboratory and
was supported by the LDRD program fundings (i.e., 17-ER-026 and 20-FS-007).
Lawrence Livermore National Laboratory is operated by Lawrence Livermore
National Security, LLC, for the U.S. Department of Energy, National Nuclear
Security Administration under Contract DE-AC52-07NA27344 and LLNL-JRNL-815816.
This work was supported in part by the AEOLUS center under US Department of
Energy Applied Mathematics MMICC award DE-SC0019303.

  \bibliographystyle{plain}
  \bibliography{references}

\end{document}